\newcommand\mO{\mathcal{O}}
\newcommand{\mb}[1]{\mathbb{#1}}
\newcommand{\mr}[1]{\mathrm{#1}}
\newcommand{\ov}{\overline}
\newcommand\ra{\rightarrow}
\newcommand\wt{\widetilde}
\newcommand\Pic{\mathrm{Pic\,}}
\newcommand{\mib}[1]{\textit{\textbf{#1}}}
\begin{document}
\begin{spacing}{1.25}
	%----------------------------------------

\newtheorem{theorem}{Theorem}[section]
\newtheorem{lemma}[theorem]{Lemma}
\newtheorem{proposition}[theorem]{Proposition}
\newtheorem{corollary}[theorem]{Corollary}
\newtheorem{conjecture}[theorem]{Conjecture}
\newtheorem{conv}[theorem]{Convention}
\newtheorem{ques}[theorem]{Question} 
\newtheorem{prb}[theorem]{Problem}

	%----------------------------------------
	\theoremstyle{definition}
	\newtheorem{definition}[theorem]{Definition}
	\newtheorem{example}[theorem]{Example}
	
	%----------------------------------------
	\theoremstyle{remark}
	\newtheorem{remark}[theorem]{Remark}
	
	%----------------------------------------
	\numberwithin{equation}{section}

	%----------------------------------------

	\title{Arithmetic Degrees are Cohomological Lyapunov Multipliers}
	\author{Jiarui Song}
	\address{School of Mathematical Sciences \\ Peking University \\ Beijing 100871 \\ China}
	\email{soyo999@pku.edu.cn}
	\author{Junyi Xie}
	\address{Beijing International Center for Mathematical Research \\ Peking University \\ Beijing 100871 \\ China}
	\email{xiejunyi@bicmr.pku.edu.cn}
	\author{She Yang}
	\address{Beijing International Center for Mathematical Research \\ Peking University \\ Beijing 100871 \\ China}
	\email{ys-yx@pku.edu.cn}

	%----------------------------------------
	\begin{abstract}
For endomorphisms of projective varieties, we prove that the arithmetic degree of a point with Zariski dense orbit must be a cohomological Lyapunov multiplier of the dynamical system. We will apply our result to deduce a corollary towards the dynamical Mordell--Lang conjecture.
	\end{abstract}
	
	%----------------------------------------
	\maketitle
	\setcounter{tocdepth}{1}
%	\tableofcontents
	
\section{Introduction}\label{Sec_intro}

Let $f:X\dashrightarrow X$ be a dominant rational self-map of a projective variety $X$ defined over $\overline{\mb{Q}}$. Fix a Weil height $h_H$ associated to an ample line bundle $H$ on $X$. For a point $x\in X(\ov{\mb{Q}})$ whose $f$-orbit $\mO_f(x)=\{x,f(x),f^2(x),\ldots\}$ is well-defined, the \emph{arithmetic degree} of $x$ is defined by 
\[\alpha_f(x)=\lim_{n\ra\infty}\max\{1, h_H(f^n(x))\}^{\frac{1}{n}},\]
provided the limit exists. It is conjectured by Kawaguchi and Silverman that this limit always exists, and furthermore, that $\alpha_f(x)=\lambda_1(f)$, the first dynamical degree of $f$, whenever the $f$-orbit $\mO_f(x)$ is Zariski dense in $X$, see \cite{KS16C}. For recent advances on this conjecture, see \cite{Mat22}. If the $f$-orbit $\mO_f(x)$ is generic, meaning that it intersects every proper closed subset of $X$ in only finitely many points, then it was proved in \cite[Theorem 2.2]{Mat25} that $\alpha_f(x)$ exists and coincides with one of the cohomological Lyapunov multipliers $\mu_i(f)$ of $f$.

In this paper, we study the behavior of arithmetic degrees of endomorphisms under the weaker assumption that the the orbit is Zariski dense in $X$, rather than generic. In the case where $f: X\ra X$ is an endomorphism, it was shown in \cite{KS16} that $\alpha_f(x)$ exists, and it is equal to the modulus of an eigenvalue of the linear map $f^*:\mathrm{N}^1(X)_{\mathbb{R}}\rightarrow\mathrm{N}^1(X)_{\mathbb{R}}$, where $\mathrm{N}^1(X)$ is the numerical group of line bundles on $X$. We strengthen this result by proving that, if $\mO_f(x)$ is Zariski dense in $X$, then $\alpha_f(x)$ must be one of the cohomological Lyapunov multipliers of $f$.

Previous works on arithmetic degrees (see for example \cite{KS16,KS16C,Mat20}) have largely focused on the case where the base field is a number field, as there is a natural notion of Weil heights. In \cite{LS24}, a generalization to arbitrary fields of characteristic zero was introduced via spread-out techniques and the notion of Moriwaki heights \cite{Mor00}. This broader framework allows one to address problems in arithmetic dynamics over fields such as $\mathbb{C}$, including, for example, applications to the dynamical Mordell–Lang conjecture (see Section \ref{Sec_app}). For generality, our results are proved over finitely generated fields, or more generally, over arbitrary fields of characteristic zero.

\begin{comment}
On the contrary, we would like to deal with dynamical systems over finitely generated fields, using the notion of the Moriwaki height. Our justification for this is as below. Firstly, every previous result holds in this setting and almost no change is needed in the proof. Secondly, one needs to consider finitely generated fields in order to apply the results to other problems over $\mathbb{C}$ (e.g. the dynamical Mordell--Lang conjecture, see Section ?) in the field of arithmetic dynamics.    
\end{comment}

\subsection*{Settings of this article.}

\ 

We start with a normal projective variety $X_{\mathbb{C}}$, a surjective endomorphism $f_{\mb{C}}:X_{\mathbb{C}}\ra X_{\mathbb{C}}$ over $\mathbb{C}$ and a point $x_{\mb{C}}\in X(\mb{C})$. Let $K$ be a finitely generated field over $\mathbb{Q}$ such that the coefficients of defining equations of $X_{\mathbb{C}}$ and $f_{\mb{C}}$, as well as the coordinates of $x_{\mb{C}}$, are all contained in $K$. In other words, there exists a projective variety $X$, a surjective endomorphism $f: X\ra X$ over $K$, and a point $x\in X(K)$ such that $(X,f,x)$ is a model of $(X_{\mathbb{C}},f_{\mb{C}}, x_{\mb{C}})$. We fix an ample line bundle $L$ on $X$. Then we can measure the complexity of the dynamical system $(X,f)$ and the orbit $\mathcal{O}_f(x)$.
\begin{enumerate}
\item
For $i\in\{0,\dots,\mathrm{dim}(X)\}$, the $i$-th \emph{dynamical degree} $\lambda_i(f)$ of $(X,f)$ is defined as $\lim\limits_{n\rightarrow\infty}((f^n)^{*}L^i\cdot L^{\mathrm{dim}(X)-i})^{\frac{1}{n}}$. The limits exist and are independent of the choice of $L$. See \cite{DS05,Dan20,Tru20}.

\item
Fix a Moriwaki height function (see subsection \ref{subsec_arithdeg}) $h_{L}:X(\overline{K})\rightarrow\mathbb{R}_{\geq1}$. Then the \emph{arithmetic degree} $\alpha_f(x)$ is defined as $\lim\limits_{n\rightarrow\infty}h_{L}(f^n(x))^{\frac{1}{n}}$. The limit exists and is independent of the choice of $L$, $h_L$ and the field $K$. See \cite{KS16,Oh22,LS24}.
\end{enumerate}

\begin{remark}
When $K$ is a number field, the arithmetic degree is defined directly using a Weil height function.
\end{remark}

In \cite{xie24}, the second-named author introduced a notion of \emph{cohomological Lyapunov multipliers} of the dynamical system $(X,f)$. For $i\in\{1,\dots,\mathrm{dim}(X)\}$, the $i$-th cohomological Lyapunov multiplier $\mu_i(f)$ is defined as $\frac{\lambda_i(f)}{\lambda_{i-1}(f)}$. We refer to Section \ref{Sec_pre} for more properties of these concepts.

\ 

Our main result is as follows. We continue with the previous notions.

\begin{theorem}\label{Thm_main}
We have $\alpha_f(x)\in\{\mu_1(f),\dots,\mu_{\mathrm{dim}(X)}(f)\}\cap\mathbb{R}_{\geq1}$, provided that the orbit $\mathcal{O}_f(x)$ is Zariski dense in $X$.
\end{theorem}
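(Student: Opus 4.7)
My proposal combines Kawaguchi--Silverman's description of arithmetic degrees as eigenvalue moduli, Matsuzawa's generic-orbit theorem, and a Mordell--Lang-type inductive descent. By \cite{KS16}, extended to Moriwaki heights on finitely generated fields via \cite{LS24}, I know that $\alpha_f(x)$ exists and coincides with $|\eta|$ for some eigenvalue $\eta$ of the linear map $f^* \colon \mathrm{N}^1(X)_\mathbb{R} \to \mathrm{N}^1(X)_\mathbb{R}$, and $\alpha_f(x) \geq 1$. The goal is thus to upgrade the conclusion ``$|\eta|$ is an eigenvalue modulus'' to ``$|\eta|$ is a cohomological Lyapunov multiplier'' using the Zariski-density hypothesis.

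If the orbit $\mathcal{O}_f(x)$ is in fact generic, \cite[Theorem 2.2]{Mat25} gives the conclusion directly. Otherwise, the orbit is Zariski dense but meets some proper closed subset $Z \subsetneq X$ infinitely often. I would invoke a dynamical Mordell--Lang-style argument tailored to the finitely-generated-field setting to produce an arithmetic progression $\{a + bn\}_{n \geq 0}$ along which $f^{a+bn}(x) \in Z$. Taking $Y_0$ to be the irreducible component of the Zariski closure of this subsequence orbit that contains $f^a(x)$, and $c$ a period so that $g := f^{bc}|_{Y_0}$ is a well-defined surjective endomorphism of $Y_0$, the point $f^a(x)$ has Zariski dense $g$-orbit in $Y_0$. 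Induction on $\dim X$ yields $\alpha_f(x)^{bc} = \alpha_g(f^a(x)) \in \{\mu_i(g)\} \cap \mathbb{R}_{\geq 1}$.

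The main obstacle is the final matching: identifying $\mu_i(g)^{1/(bc)}$ with some $\mu_j(f)$. This does not follow from the definitions, since dynamical degrees of an invariant subvariety are not directly dynamical degrees of the ambient variety. I would attempt to extract a suitable interlacing property of cohomological Lyapunov multipliers under restriction to invariant subvarieties from the framework of \cite{xie24}, perhaps through a characterization of $\mu_i$ via slopes of a canonical filtration on $\mathrm{N}^*(X)_\mathbb{R}$ compatible with closed immersions. An alternative route, preferable if the descent cannot be closed cleanly, is to work directly with the $f^*$-action on $\mathrm{N}^*(X)_\mathbb{R}$: combining the intersection-theoretic constraint $|\eta|^k \leq \lambda_k(f)$ on powers of an eigenclass (for the maximal $k$ with nonzero $k$-th self-intersection) with positivity arguments forced by Zariski density, one should be able to pin down $|\eta|$ as an exact $\mu_j(f)$ rather than merely a geometric mean of consecutive ones, without passing to a subvariety at all.
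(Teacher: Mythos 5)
Your proposal is a genuinely different route from the paper's, but it has two gaps that prevent it from closing.

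First, the descent step is not available. Once you know the orbit is Zariski dense but not generic, you only know that the return set $\{n : f^n(x) \in Z\}$ into some proper closed subset $Z$ is infinite. Extracting from that an arithmetic progression $\{a + bn\}$, and hence a periodic irreducible component $Y_0$ with a well-defined endomorphism $g = f^{bc}|_{Y_0}$, is precisely the content of the dynamical Mordell--Lang conjecture, which is open for general surjective endomorphisms of projective varieties. You cannot invoke it as a tool here; indeed, the paper's Corollary 1.4 is a partial result \emph{towards} DML deduced from Theorem 1.2, not the other way around.

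Second, even granting the descent, the matching step you flag as ``the main obstacle'' is a genuine one and remains unresolved. The paper's Proposition 2.1(2), coming from the relative dynamical degree formula, yields the inclusion $\{\mu_i(g)\} \subseteq \{\mu_i(f)\}$ only for a \emph{surjective} morphism $\pi \colon X \to Y$ with $\pi \circ f = g \circ \pi$ (a quotient), not for a closed immersion $Y_0 \hookrightarrow X$ with $Y_0$ invariant under $f^{bc}$ (a restriction). No such interlacing for invariant subvarieties is established in \cite{xie24,xie25}, and the ``slope filtration on $\mathrm{N}^*(X)_{\mathbb R}$ compatible with closed immersions'' you hope for does not exist in that form. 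Your alternative (the bound $|\eta|^k \leq \lambda_k(f)$ plus unspecified ``positivity from Zariski density'') is a heuristic, not an argument: log-concavity of dynamical degrees only confines $|\eta|$ to an interval of products of consecutive $\mu_j$'s, not to the discrete set $\{\mu_j(f)\}$.

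The paper's proof avoids descent to subvarieties altogether. It maps to the Albanese $A=\mathrm{Alb}(X)$ to control the spectral radius $\rho_0$ of $f^*$ on $\mathrm{Pic}^0(X)$ (Proposition 2.6); it proves a lifting lemma (Lemma 3.1) giving an $f^*$-equivariant linear lift of the high-eigenvalue part $\bigoplus_{\mu>\rho_0} E_\mu \subset \mathrm{N}^1(X)_{\mathbb R}$ into $\mathrm{Pic}(X)_{\mathbb R}$, hence genuine canonical heights on that part; it uses \cite[Theorem 1.3]{xie25} (Theorem 2.3) to locate a big class whose lift decomposes as $\wt M_1+\wt M_2$ into high- and low-eigenvalue pieces; and, assuming by contradiction that $\alpha_f(x)$ lies strictly between consecutive $\mu_\ell(f), \mu_{\ell+1}(f)$, it compares the exact exponential growth of the canonical height of $\wt M_1$ with the $O(n^{d}\rho_1^n)$ bound on the height of $\wt M_2$, along the subsequence provided by Lemma 2.8 where a big divisor computes $\alpha_f(x)$. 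Both possible cases ($c_{i,k}$ all zero or not) give contradictions. This is a purely height-theoretic and linear-algebraic argument on the ambient variety, requiring neither DML nor any restriction-interlacing.
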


From Theorem \ref{Thm_main}, we deduce the following corollary about the Kawaguchi--Silverman conjecture.

\begin{corollary}
Let $X$ be a normal projective variety and $f:X\ra X$ be a surjective endomorphism of $X$ with $\lambda_1(f)>\lambda_2(f)$. Then the Kawaguchi--Silverman conjecture holds. In other words, if $\mO_f(x)$ is Zariski dense in $X$, then $\alpha_f(x)=\lambda_1(f)$. 
\end{corollary}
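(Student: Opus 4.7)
The plan is to deduce the corollary directly from Theorem \ref{Thm_main} by a numerical argument on the cohomological Lyapunov multipliers. By Theorem \ref{Thm_main}, the Zariski density hypothesis gives
\[
\alpha_f(x)\in\{\mu_1(f),\dots,\mu_{\dim(X)}(f)\}\cap\mathbb{R}_{\geq 1},
\]
so the task reduces to showing that, under the assumption $\lambda_1(f)>\lambda_2(f)$, the only element of this set which is $\geq 1$ is $\mu_1(f)=\lambda_1(f)$.

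First I would recall that $\lambda_0(f)=1$, so $\mu_1(f)=\lambda_1(f)$; and that for a surjective endomorphism of a projective variety one has $\lambda_1(f)\geq 1$, so $\mu_1(f)\geq 1$. The key ingredient is then the log-concavity of dynamical degrees (the Khovanskii--Teissier-type inequality, proved for dynamical degrees of endomorphisms in the same references as in item (1) of the Settings, e.g.\ \cite{DS05,Dan20,Tru20}), which reads
\[
\lambda_i(f)^2\geq\lambda_{i-1}(f)\,\lambda_{i+1}(f)\qquad\text{for }1\leq i\leq\dim(X)-1.
\]
Rewriting this as $\lambda_i/\lambda_{i-1}\geq\lambda_{i+1}/\lambda_i$ yields $\mu_i(f)\geq\mu_{i+1}(f)$, so the sequence of cohomological Lyapunov multipliers is non-increasing.

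Now the hypothesis $\lambda_1(f)>\lambda_2(f)$ gives $\mu_2(f)=\lambda_2(f)/\lambda_1(f)<1$, and monotonicity forces $\mu_i(f)\leq\mu_2(f)<1$ for every $i\geq 2$. Hence the intersection with $\mathbb{R}_{\geq 1}$ in Theorem \ref{Thm_main} collapses to the singleton $\{\mu_1(f)\}=\{\lambda_1(f)\}$, and we conclude $\alpha_f(x)=\lambda_1(f)$, which is the Kawaguchi--Silverman conjecture in this case. There is essentially no obstacle beyond Theorem \ref{Thm_main} itself; the only care needed is to invoke the correct form of the log-concavity inequality in the present generality (normal projective varieties over $\mathbb{C}$, or equivalently over a finitely generated field after spreading out), which is available in the cited literature.
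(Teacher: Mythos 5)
Your proof is correct and is exactly the (unwritten) argument the paper intends: Theorem \ref{Thm_main} reduces the question to a numerical statement about the $\mu_i(f)$, and the hypothesis $\lambda_1(f)>\lambda_2(f)$ together with the monotonicity $\mu_1(f)\geq\mu_2(f)\geq\cdots$ (which the paper already records as Proposition \ref{Prop_Lya}(1), so you need not re-derive it from Khovanskii--Teissier) forces $\mu_i(f)<1$ for all $i\geq 2$, leaving $\mu_1(f)=\lambda_1(f)$ as the only candidate.
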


\begin{remark}
\begin{enumerate}
\item
We have assumed that $X$ is normal and geometrically connected for simplicity. These conditions are mild and are usually easy to fulfill in the applications.

\item
It is proved in \cite{KS16} that $\alpha_f(x)$ is the modulus of an eigenvalue of the linear map $f^*:\mathrm{N}^1(X)_{\mathbb{R}}\rightarrow\mathrm{N}^1(X)_{\mathbb{R}}$, where $\mathrm{N}^1(X)$ is the numerical group of line bundles on $X$. In fact, all of the cohomological Lyapunov multipliers $\mu_i(f)$ are real and positive eigenvalues of this map (see Theorem \ref{Thm_Xie1.4}). So our theorem is a strengthening of Kawaguchi--Silverman's result.

\item According to the Kawaguchi--Silverman conjecture, one expects that $\alpha_f(x)=\mu_1(f)=\lambda_1(f)$ when $\mO_f(x)$ is Zariski dense in $X$.
\end{enumerate}
\end{remark}

As an application, we can prove the following result of dynamical Mordell--Lang type. It is another example of applying height arguments toward the DML conjecture, following the spirit of \cite{XY}. The point in the corollary is that the speeds of height growth of $\mathcal{O}_f(x)$ and $\mathcal{O}_g(y)$ must be different. One could compare this corollary with \cite[Proposition 4.1]{XY}.

\begin{corollary}\label{Cor_mordelllang}
Let $X$ and $Y$ be projective varieties over $\mathbb{C}$. Let $f$ and $g$ be surjective endomorphisms of $X$ and $Y$, respectively. Let $x\in X(\mathbb{C})$ and $y\in Y(\mathbb{C})$ be closed points and let $V\subseteq X\times Y$ be a positive dimensional irreducible closed subvariety. Suppose that
\begin{enumerate}
\item
$\{\mu_1(f),\dots,\mu_{\mathrm{dim}(X)}(f)\}\cap\{\mu_1(g),\dots,\mu_{\mathrm{dim}(Y)}(g)\}\cap\mathbb{R}_{\geq1}=\emptyset$;
\item
the orbits $\mathcal{O}_f(x)$ and $\mathcal{O}_g(y)$ are dense in $X$ and $Y$, respectively; and
\item
both of the projection maps $V\rightarrow X$ and $V\rightarrow Y$ are generically finite onto their image.
\end{enumerate}
Then $V\cap\mathcal{O}_{f\times g}((x,y))$ cannot be dense in $V$.
\end{corollary}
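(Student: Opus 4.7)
The plan is to argue by contradiction: assume that $V\cap\mathcal{O}_{f\times g}((x,y))$ is Zariski dense in $V$, and derive that $\alpha_f(x)=\alpha_g(y)$. Since the orbits of $x$ and $y$ are Zariski dense in $X$ and $Y$, Theorem~\ref{Thm_main} (applied after normalising $X$ and $Y$ and lifting $f$, $g$, $x$, $y$ if needed) gives $\alpha_f(x)\in\{\mu_1(f),\dots,\mu_{\mathrm{dim}(X)}(f)\}\cap\mathbb{R}_{\geq1}$ and $\alpha_g(y)\in\{\mu_1(g),\dots,\mu_{\mathrm{dim}(Y)}(g)\}\cap\mathbb{R}_{\geq1}$; by hypothesis~(1) these two sets are disjoint, so the equality $\alpha_f(x)=\alpha_g(y)$ will produce the desired contradiction.

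For the key height comparison on $V$, fix Moriwaki heights associated to ample line bundles $L_X$ on $X$ and $L_Y$ on $Y$, and set $H=\pi_X^*L_X\otimes\pi_Y^*L_Y$, which is ample on $X\times Y$ with
\[h_{H|_V}(v)=h_{L_X}(\pi_X(v))+h_{L_Y}(\pi_Y(v))+O(1),\qquad v\in V.\]
Because $\pi_X\colon V\to\pi_X(V)$ is generically finite onto its image and $L_X$ restricts to an ample class there, the line bundle $\pi_X^*L_X|_V$ is big; hence for some integer $N\geq1$ one may write $N(\pi_X^*L_X|_V)\sim A_X+E_X$ with $A_X$ ample and $E_X$ effective on $V$. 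Comparing the ample heights $h_{A_X}$ and $h_{H|_V}$ and using that $h_{E_X}$ is bounded below on $V\setminus\Supp(E_X)$, one obtains a constant $\gamma_X>0$ with
\[h_{H|_V}(v)\leq\gamma_X\,h_{L_X}(\pi_X(v))+O(1)\quad\text{for all }v\in V\setminus\Supp(E_X),\]
and by symmetry an analogous inequality with constants $\gamma_Y>0$, bundle $L_Y$ and projection $\pi_Y$, off a proper subvariety $\Supp(E_Y)\subsetneq V$.

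Next, set $S=\{n\geq0:(f^n(x),g^n(y))\in V\}$ and $p_n=(f^n(x),g^n(y))$ for $n\in S$. Since $\{p_n\}_{n\in S}$ is Zariski dense in the irreducible $V$, the subset $S'=\{n\in S:p_n\notin\Supp(E_X)\cup\Supp(E_Y)\}$ is infinite. The ample identity above together with the limits $h_{L_X}(f^n(x))^{1/n}\to\alpha_f(x)$ and $h_{L_Y}(g^n(y))^{1/n}\to\alpha_g(y)$ yields $\lim_{n\in S'}h_{H|_V}(p_n)^{1/n}=\max(\alpha_f(x),\alpha_g(y))$, whereas the two bigness inequalities on $S'$ force this same limit to be at most $\alpha_f(x)$ and at most $\alpha_g(y)$; consequently $\alpha_f(x)=\alpha_g(y)$, contradicting the disjointness from the first paragraph. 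The main technical point I anticipate is the bigness-based comparison $h_{H|_V}\leq\gamma_X\,h_{L_X}\circ\pi_X+O(1)$ off a proper subvariety, which reflects the standard principle that a big height dominates an ample one up to $O(1)$ on a Zariski open subset; the dynamics then enter only through Theorem~\ref{Thm_main} and the forced equality of the two subsequence limits.
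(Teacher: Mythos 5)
Your proof is correct and takes essentially the same approach as the paper: reduce to the normal case over a finitely generated field, invoke Theorem~\ref{Thm_main} with hypothesis~(1) to get $\alpha_f(x)\neq\alpha_g(y)$, use generic finiteness of the projections to make $\pi_X^*L_X|_V$ (resp.\ $\pi_Y^*L_Y|_V$) big, and exploit the density of the return set together with Lemma-\ref{Lem_bight}-style height comparisons off the support of the effective part to force $\alpha_f(x)=\alpha_g(y)$. The only cosmetic difference is that you symmetrize via $H=\pi_X^*L_X\otimes\pi_Y^*L_Y$ and derive equality from a two-sided inequality, whereas the paper fixes WLOG $\alpha_f(x)<\alpha_g(y)$, uses bigness only of $p_1^*L_1|_V$, and contradicts the boundedness-below of $Nh_{L_1}(f^n(x))-h_{L_2}(g^n(y))$ along an infinite set of return times.
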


The structure of this article is as follows. In Section \ref{Sec_pre}, we review some properties of dynamical degrees, cohomological Lyapunov multipliers, and arithmetic degrees. Then we prove Theorem \ref{Thm_main} in Section \ref{Sec_proof}. Finally, we prove Corollary \ref{Cor_mordelllang} in Section \ref{Sec_app}.

\section{Preparations}\label{Sec_pre}
In subsection \ref{subsec_dyndeg}, we recall some properties of dynamical degrees and cohomological Lyapunov multipliers. In subsection \ref{subsec_arithdeg}, we recall the notion of Moriwaki height and some properties of the arithmetic degrees.

\subsection{Dynamical degrees and cohomological Lyapunov multipliers}\label{subsec_dyndeg}

\ 

In this subsection, we work over an arbitrary base field $K$. For safety, all the varieties $X$ (and $Y$, etc.) are assumed to be geometrically integral and projective over $K$. We fix a surjective endomorphism $f$ of $X$. We have defined the dynamical degrees $\lambda_i(f)$ and the cohomological Lyapunov multipliers $\mu_i(f)$ in Section \ref{Sec_intro}. Now we recall some properties.

We start with the following proposition. Part (i) follows from the fact that dynamical degrees are log-concave, while part (ii) is a consequence of the relative dynamical degree formula \cite[Theorem 1.1]{DN11} (see also \cite[Theorem 4]{Dang} and \cite[Theorem 1.3]{Tru20}).

\begin{proposition}\label{Prop_Lya}
Let $g$ be a surjective endomorphism of $Y$ and let $\pi:X\rightarrow Y$ be a surjective morphism such that $\pi\circ f=g\circ\pi$. Then the following holds.
\begin{enumerate}
\item
$\mu_1(f)\geq\cdots\geq\mu_{\mathrm{dim}(X)}(f)>0$.
\item
$\{\mu_1(g),\dots,\mu_{\mathrm{dim}(Y)}(g)\}\subseteq\{\mu_1(f),\dots,\mu_{\mathrm{dim}(X)}(f)\}$ as multiple-sets.
\end{enumerate}
\end{proposition}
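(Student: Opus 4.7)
The plan is to follow the two hints in the statement, one for each part.

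For part (i), the strategy is to invoke the log-concavity of dynamical degrees, namely $\lambda_i(f)^2 \geq \lambda_{i-1}(f)\,\lambda_{i+1}(f)$ for $1 \leq i \leq \mathrm{dim}(X)-1$ (with the convention $\lambda_0(f)=1$, and positivity of each $\lambda_i(f)$ guaranteed by surjectivity of $f$, cf.\ \cite{Tru20}). Dividing through by $\lambda_{i-1}(f)\lambda_i(f)$ directly yields $\mu_i(f) \geq \mu_{i+1}(f)$, and positivity of each $\mu_i(f)$ is immediate from that of the $\lambda_i(f)$. This part should be essentially a one-line rearrangement.

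For part (ii), the strategy is to use the relative dynamical degree formula \cite[Theorem 1.1]{DN11} (see also \cite[Theorem 4]{Dang} and \cite[Theorem 1.3]{Tru20}). Setting $d=\mathrm{dim}(Y)$, $e=\mathrm{dim}(X)-d$, and denoting by $\lambda_k(f|_\pi)$ the $k$-th relative dynamical degree of $f$ over $g$ for $0 \leq k \leq e$, this formula reads
\[
\lambda_i(f) \;=\; \max_{\max(0,\,i-e)\,\leq\,j\,\leq\,\min(i,d)} \lambda_j(g)\,\lambda_{i-j}(f|_\pi), \qquad 0 \leq i \leq \mathrm{dim}(X).
\]
Passing to logarithms and writing $\phi(i)=\log\lambda_i(f)$, $\psi(j)=\log\lambda_j(g)$, $\theta(k)=\log\lambda_k(f|_\pi)$, this asserts that $\phi$ is the sup-convolution of $\psi$ and $\theta$. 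Since both $\psi$ and $\theta$ are concave with nonincreasing slopes equal to $\log\mu_j(g)$ and $\log\mu_k(f|_\pi)$ respectively (by applying part (i) to $g$ and to the analogous statement for relative dynamical degrees), a standard elementary fact about sup-convolutions of concave sequences implies that the slope sequence of $\phi$ is exactly the decreasing rearrangement of the concatenated slope sequences of $\psi$ and $\theta$. Exponentiating, this gives a multi-set equality
\[
\{\mu_1(f),\ldots,\mu_{\mathrm{dim}(X)}(f)\} \;=\; \{\mu_1(g),\ldots,\mu_d(g)\} \,\cup\, \{\mu_1(f|_\pi),\ldots,\mu_e(f|_\pi)\},
\]
from which the desired multi-set inclusion follows immediately.

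The main obstacle will be pinning down the exact formulation of the relative dynamical degree formula suitable for our setting (surjective endomorphisms of normal, geometrically integral projective varieties over an arbitrary field of characteristic zero), together with the log-concavity of the relative dynamical degrees $\lambda_\bullet(f|_\pi)$. Once these two ingredients are in hand, the sorted-merge identification of slopes is purely combinatorial and can be verified by a direct exchange argument: moving the optimizer $j$ to $j+1$ multiplies the product $\lambda_j(g)\,\lambda_{i-j}(f|_\pi)$ by $\mu_{j+1}(g)/\mu_{i-j}(f|_\pi)$, so the monotonicity of the two slope sequences forces the maximum to be attained at the $j$ corresponding to picking the top $i$ slopes overall.
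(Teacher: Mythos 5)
Your proof is correct and follows essentially the same route the paper indicates: part (i) is the one-line rearrangement of the log-concavity inequality $\lambda_i(f)^2\geq\lambda_{i-1}(f)\lambda_{i+1}(f)$, and part (ii) is deduced from the relative dynamical degree formula of Dinh--Nguyen/Dang/Truong. You supply the combinatorial "sorted-merge of slopes" argument that the paper leaves implicit (and in fact you obtain the stronger multi-set \emph{equality} $\{\mu_i(f)\}=\{\mu_j(g)\}\cup\{\mu_k(f|_\pi)\}$); the one ingredient you correctly flag as needing a citation, the log-concavity of the relative dynamical degrees $\lambda_\bullet(f|_\pi)$, is indeed available in the references the paper cites (e.g.\ \cite{Tru20}).
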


In particular, in the above setting, we see that $\lambda_1(g)=\mu_1(g)$ is also a cohomological Lyapunov multiplier of $f$.

Next, we recall the key properties of cohomological Lyapunov multipliers, following \cite{xie25}. We denote $\mathrm{N}^1(X)$ as the numerical class group of line bundles on $X$. The theorem of the base guarantees that $\mathrm{N}^1(X)$ is a finite free $\mathbb{Z}$-module. We denote $f^*:\text{N}^1(X)_{\mathbb{R}}\rightarrow\text{N}^1(X)_{\mathbb{R}}$ as the pull-back map induced by $f$.

The following theorem is \cite[Theorem 1.4]{xie25}.

\begin{theorem}\label{Thm_Xie1.4}
We have $\{\mu_1(f),\dots,\mu_{\mathrm{dim}(X)}(f)\}=\{\alpha\in\mathbb{R}|\ \mathrm{Im}(f^*-\alpha)\cap\mathrm{Big}(X)=\emptyset\}$, in which $\mathrm{Big}(X)\subseteq\mathrm{N}^1(X)_{\mathbb{R}}$ is the big cone of $X$. In particular, all of the $\mu_i(f)$ are eigenvalues of $f^*$.
\end{theorem}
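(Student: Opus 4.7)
The plan is to prove the set equality by two inclusions, with the eigenvalue assertion following as a corollary. Write $\Sigma=\{\mu_1(f),\dots,\mu_{\dim X}(f)\}$ and $T=\{\alpha\in\mathbb{R}\mid\mathrm{Im}(f^*-\alpha)\cap\mathrm{Big}(X)=\emptyset\}$. The eigenvalue statement is immediate once $\Sigma\subseteq T$ is established: if some $\mu_i(f)$ were not an eigenvalue of $f^*$, then $f^*-\mu_i(f)$ would be a bijection of $\mathrm{N}^1(X)_\mathbb{R}$ whose image contains the nonempty open cone $\mathrm{Big}(X)$, contradicting $\mu_i(f)\in T$. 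A first reduction gives $T\subseteq[0,\lambda_1(f)]$: for $\alpha<0$ and ample $D$ the class $(f^*-\alpha)D$ is ample; for $\alpha>\lambda_1(f)$ the Neumann series $D_\alpha:=-\sum_{n\geq 0}\alpha^{-n-1}(f^n)^*L$ converges in $\mathrm{N}^1(X)_\mathbb{R}$ because the summands decay geometrically (the class $(f^n)^*L$ grows only at exponential rate $\lambda_1(f)<\alpha$), and $(f^*-\alpha)D_\alpha=L$ is big.

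For $\Sigma\subseteq T$, I would argue by contradiction. Assume $f^*D=\mu_i(f)D+B$ for some $D\in\mathrm{N}^1(X)_\mathbb{R}$ and some big class $B$, and iterate:
\[(f^n)^*D=\mu_i(f)^n D+\sum_{k=0}^{n-1}\mu_i(f)^{n-1-k}(f^k)^*B.\]
After intersecting both sides with the mixed class $((f^n)^*L)^{i-1}\cdot L^{\dim X-i}$, the left-hand side grows at most at exponential rate $\lambda_i(f)$ (by the existence of $\lambda_i(f)$ applied to the codimension-$i$ class $D\cdot L^{i-1}$), while the positivity of $B$ (write $B\geq\varepsilon L$ after rescaling), combined with Khovanskii--Teissier mixed inequalities and the log-concavity of $\lambda_\bullet(f)$ from Proposition \ref{Prop_Lya}(i), forces the sum on the right to exceed the left asymptotically---a contradiction. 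The analogous argument for $i=1$, i.e.\ $\mu_1(f)=\lambda_1(f)$, is classical and recovers the spectral-radius characterization of $\lambda_1(f)$.

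For $T\subseteq\Sigma$, suppose $\alpha\in(0,\lambda_1(f)]\setminus\Sigma$ lies in a gap $\mu_{i+1}(f)<\alpha<\mu_i(f)$ (with the convention $\mu_0(f)=+\infty$). The goal is to produce $D$ with $(f^*-\alpha)D$ big. Via Perron--Frobenius on the nef cone together with an induction on $\dim X$ using the relative dynamical degree formula (Proposition \ref{Prop_Lya}(ii)) applied to equivariant surjections $X\to Y$, I would build an $f^*$-invariant filtration $\{0\}=V_0\subsetneq V_1\subsetneq\cdots\subsetneq V_{\dim X}=\mathrm{N}^1(X)_\mathbb{R}$ whose graded pieces carry spectral radii equal to the distinct elements of $\{\mu_j(f)\}$. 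On the graded pieces with spectral radius below $\alpha$, invert $f^*-\alpha$ by the Neumann series $-\sum_{n\geq0}\alpha^{-n-1}(f^*)^n$; on those with spectral radius above $\alpha$, invert (after passing to an $f^*$-stable complement to bypass the possible non-surjectivity of $f^*$) by $\sum_{n\geq0}\alpha^n(f^*)^{-n-1}$. Reassembling the partial inverses and evaluating at an ample class $L$ yields $D$ with $(f^*-\alpha)D=L$ big. The main obstacle is producing this filtration whose spectral radii match exactly the $\mu_j(f)$'s: this is the technical heart of the argument, and is precisely what pins down the equality $T=\Sigma$ rather than a coarser containment.
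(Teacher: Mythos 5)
The paper cites this result as \cite[Theorem 1.4]{xie25} and does not reprove it, so there is no in-paper argument to compare against; I evaluate your outline on its own terms, and both inclusions have gaps. For $\Sigma\subseteq T$: intersect the identity $(f^n)^*D = \mu_i^n D + \sum_{k<n}\mu_i^{n-1-k}(f^k)^*B$ with $\gamma_n = ((f^n)^*L)^{i-1}\cdot L^{\dim X - i}$. Bounding $\pm D$ by $CL$ gives $|(f^n)^*D\cdot\gamma_n|\leq C\deg_i(f^n)$ and $\mu_i^n|D\cdot\gamma_n|\leq C\mu_i^n\deg_{i-1}(f^n)$, both $O\bigl(n^m\lambda_i(f)^n\bigr)$; while the Khovanskii--Teissier bound $(f^k)^*L\cdot\gamma_n\geq(\deg_i f^k)^{1/i}(\deg_i f^n)^{(i-1)/i}$ makes your lower bound on the sum also of order $\lambda_i(f)^{n(1+o(1))}$. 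The two sides grow at the same exponential rate, so there is no asymptotic domination and hence no contradiction; the polynomial factors permitted by $\deg_i(f^n)=\lambda_i(f)^{n+o(n)}$ absorb the discrepancy. For $i=1$ there is in fact a clean argument, but it is of an entirely different type: by Perron--Frobenius applied to $f_*$ on the cone of curves, there is a nonzero pseudo-effective curve class $\gamma$ with $f_*\gamma=\lambda_1\gamma$, and the projection formula gives $(f^*D-\lambda_1 D)\cdot\gamma=0$, whereas a big divisor pairs strictly positively with any nonzero pseudo-effective curve class. This indicates that $\Sigma\subseteq T$ for $i>1$ requires a higher-codimension analogue of this duality, not the iterated-degree estimate you propose.

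For $T\subseteq\Sigma$: you flag that constructing the $f^*$-invariant filtration of $\mathrm{N}^1(X)_{\mathbb{R}}$ whose graded spectral radii are exactly the distinct $\mu_j(f)$'s is ``the technical heart,'' and you do not supply it --- but that construction is essentially equivalent to the theorem, so the argument as written is circular. Even granting the filtration, two further issues remain. First, $\sum_{n\geq0}\alpha^n(f^*)^{-n-1}$ requires $f^*$ to be invertible on the relevant subspace, which can fail for endomorphisms, and ``passing to an $f^*$-stable complement'' is not spelled out. Second, and more seriously, reassembling partial resolvents along a filtration only controls $(f^*-\alpha)D$ modulo lower filtration pieces; bigness is an open condition in the full space $\mathrm{N}^1(X)_{\mathbb{R}}$, not a property of a class in a graded quotient, so one cannot conclude $(f^*-\alpha)D$ is big without additional work to choose the lifts compatibly with the positivity structure.
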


The theorem below is a combination of Theorem \ref{Thm_Xie1.4} and \cite[Theorem 1.3]{xie25}.

\begin{theorem}\label{Thm_Xie1.3}
The linear subspace $\sum\limits_{i=1}^{\mathrm{dim}(X)}\mathrm{ker}(f^*-\mu_i(f))^{\rho}\subseteq\mathrm{N}^1(X)_{\mathbb{R}}$ has a nonempty intersection with $\mathrm{Big}(X)$, in which $\rho$ is the rank of $\mathrm{N}^1(X)$.
\end{theorem}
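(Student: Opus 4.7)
The plan is to decompose $\mathrm{N}^1(X)_{\mathbb{R}}$ according to the spectral decomposition of $f^*$ and use Theorem \ref{Thm_Xie1.4} to pin down which $f^*$-invariant subspaces can meet the big cone. Write $\mathrm{N}^1(X)_{\mathbb{R}} = V \oplus W$, where $V := \sum_{i=1}^{\mathrm{dim}(X)} \mathrm{ker}(f^* - \mu_i(f))^{\rho}$ is the sum of the generalized eigenspaces of $f^*$ corresponding to the cohomological Lyapunov multipliers, and $W$ is the $f^*$-invariant sum of the remaining generalized eigenspaces (for all other real eigenvalues and for the complex conjugate pairs).

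First I would show that $W$ contains no big class. Since none of the $\mu_i$ is an eigenvalue of $f^*|_W$, the operator $(f^*|_W - \mu_i \cdot \mathrm{id})$ is an automorphism of $W$ for every $i$. Hence $W = (f^*|_W - \mu_i)(W) \subseteq \mathrm{Im}(f^* - \mu_i)$, and applying Theorem \ref{Thm_Xie1.4} to every $\mu_i$ yields $W \subseteq \bigcap_i \mathrm{Im}(f^* - \mu_i)$, which is disjoint from $\mathrm{Big}(X)$.

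It remains to produce a big class in $V$. Since $f$ is surjective, $f^*$ preserves the open convex cone $\mathrm{Big}(X)$ and its closure $\overline{\mathrm{Big}(X)}$, which is salient and of full dimension. A Perron--Frobenius-type argument for cone-preserving linear maps provides a nonzero pseudo-effective eigenvector $D_0 \in \mathrm{ker}(f^* - \mu_1(f)) \subseteq V$ with eigenvalue equal to the spectral radius $\lambda_1(f) = \mu_1(f)$. To upgrade $D_0$ to a genuinely big class inside $V$, I would analyze the normalized orbit $(f^*)^n \omega / (n^k \mu_1(f)^n)$ of a big class $\omega$ for an appropriate integer $k$ dictated by the Jordan structure of $f^*$ on the top eigenspace, and pass to a subsequential limit: the spectral dominance of $\mu_1(f)$ together with Stage~1 should force the limit to lie in $V \cap \overline{\mathrm{Big}(X)}$, after which a small perturbation by $D_0$ combined with the openness of $\mathrm{Big}(X)$ yields a big class purely in $V$.

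The hardest step is this last one. The obstruction is that $W$ may harbor complex conjugate eigenvalue pairs of modulus $\mu_1(f)$, so the naive normalization $(f^*)^n \omega / \mu_1(f)^n$ can pick up non-decaying oscillatory $W$-contributions and fail to converge into $V$. Controlling these oscillations while genuinely landing in the open cone $\mathrm{Big}(X)$ and staying inside $V$ requires a delicate spectral analysis of $f^*$ combined with the cone-invariance of $\overline{\mathrm{Big}(X)}$; this is the content of \cite[Theorem 1.3]{xie25}, which combined with Theorem \ref{Thm_Xie1.4} as above delivers the statement.
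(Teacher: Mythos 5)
Your proposal ultimately makes the same reduction as the paper: the paper states this theorem as ``a combination of Theorem \ref{Thm_Xie1.4} and \cite[Theorem 1.3]{xie25},'' and you too end by invoking \cite[Theorem 1.3]{xie25} together with Theorem \ref{Thm_Xie1.4}. At that level you and the paper agree. The extra reasoning you supply around the citation, however, does not advance the argument and contains one misdirection worth flagging. Your Stage 1 does correctly show $W \cap \mathrm{Big}(X) = \emptyset$ — for any fixed $\mu_i$ the map $f^* - \mu_i$ restricts to an automorphism of $W$, so $W \subseteq \mathrm{Im}(f^* - \mu_i)$, which misses $\mathrm{Big}(X)$ by Theorem \ref{Thm_Xie1.4} — but this statement about $W$ neither implies nor helps imply $V \cap \mathrm{Big}(X) \neq \emptyset$: a big class $B$ decomposes as $B_V + B_W$, and there is no reason $B_V$ should be big merely because $B_W$ is not. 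Stage 1 is thus a red herring in the logical structure.

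All the content is in your Stage 2, and as you honestly acknowledge, the Perron--Frobenius sketch has real gaps: the pseudo-effective eigenvector $D_0$ for $\mu_1(f)$ need not be big; the normalized iterates $(f^*)^n\omega/(n^k\mu_1(f)^n)$ may fail to stabilize into $V$ because of complex eigenvalues of modulus $\mu_1(f)$; and even a subsequential limit can land on the boundary of the pseudo-effective cone rather than in the open cone $\mathrm{Big}(X)$, at which point perturbing by a merely pseudo-effective $D_0$ does not rescue bigness. These are precisely the difficulties that \cite[Theorem 1.3]{xie25} resolves, so deferring to it is the right call and matches the paper — just be aware that your Stage 1 plays no role in the deduction and your Stage 2 is a motivational sketch, not an independent argument.
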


Now we shall analyze the first dynamical degree of endomorphisms of abelian varieties for future uses.

\begin{definition}
Let $A$ be an abelian variety and let $f$ be an algebraic group endomorphism of $A$. We define the spectral radius of $f$, denoted by $\rho(f)$, as the maximum of the absolute values of roots of the minimal polynomial of $f$.
\end{definition}

\begin{proposition}\label{Prop_abel}
Let $A$ be an abelian variety and let $f$ be a self-isogeny of $A$.
\begin{enumerate}
\item
Let $f^{\vee}:A^{\vee}\rightarrow A^{\vee}$ be the dual isogeny of $f$. Then $\rho(f)=\rho(f^{\vee})$.
\item
Let $\tau_a$ be a translation of $A$ by some point $a\in A(K)$. Then $\lambda_1(f)=\lambda_1(\tau_a\circ f)$.
\item
Suppose that the base field $K$ has characteristic 0. Then $\rho(f)\leq\sqrt{\lambda_1(f)}$.
\end{enumerate}
\end{proposition}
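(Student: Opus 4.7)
\medskip

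\noindent\textbf{Proof plan for Proposition \ref{Prop_abel}.} The three parts are essentially independent, and I would handle them in the given order.

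For part (i), the plan is to compare $f$ and $f^{\vee}$ through the $\ell$-adic Tate module. Fix a prime $\ell$ invertible in $K$. The isogeny $f^{\vee}$ acts on $T_{\ell}(A^{\vee})$, and the Weil pairing identifies $T_\ell(A^\vee)\otimes \mb{Q}_\ell$ with the $\mb{Q}_\ell$-linear dual of $T_\ell(A)\otimes \mb{Q}_\ell$ in such a way that $T_\ell(f^{\vee})$ is transpose to $T_\ell(f)$. Consequently $f$ and $f^{\vee}$ share the same characteristic polynomial, hence the same minimal polynomial over $\mb{Q}$, and in particular $\rho(f)=\rho(f^{\vee})$.

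For part (ii), I would simply note that translations act trivially on $\mr{N}^1(A)_{\mb{R}}$. Indeed, by the theorem of the square, for any line bundle $L$ on $A$ and any $a\in A(K)$, the bundle $\tau_a^*L\otimes L^{-1}$ lies in $\mr{Pic}^0(A)$, so $\tau_a^*=\mr{id}$ on $\mr{N}^1(A)_{\mb{R}}$. Therefore $(\tau_a\circ f)^*=f^*\circ\tau_a^*=f^*$ as endomorphisms of $\mr{N}^1(A)_{\mb{R}}$, which immediately gives $\lambda_1(\tau_a\circ f)=\lambda_1(f)$ by the spectral-radius description of the first dynamical degree.

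For part (iii), the key is to promote the eigenvalues of $f$ on the Lie algebra to eigenvalues of $f^*$ on $\mr{N}^1(A)_{\mb{R}}$ via Hodge theory; this is where the characteristic-zero hypothesis enters. Spreading out and choosing an embedding $K\hookrightarrow\mb{C}$, I would base change to assume $K=\mb{C}$ (both $\rho(f)$ and $\lambda_1(f)$ are preserved, since eigenvalues of the minimal polynomial and the action on $\mr{N}^1$ are insensitive to this). Write $A(\mb{C})=V/\Lambda$ with $V=\mr{Lie}(A)$, and let $F:V\to V$ be the $\mb{C}$-linear lift of $f$. Let $\beta_1,\dots,\beta_g$ be the eigenvalues of $F$; then the eigenvalues of $f^*$ on $H^1(A,\mb{C})=H^{1,0}\oplus H^{0,1}$ are $\{\beta_j\}\cup\{\ov{\beta_j}\}$, and those on the $(1,1)$-component $H^{1,0}\otimes H^{0,1}$ of $H^2=\wedge^2 H^1$ are all products $\beta_i\ov{\beta_j}$. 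In particular each $|\beta_i|^2=\beta_i\ov{\beta_i}$ is an eigenvalue of $f^*$ on $H^{1,1}(A,\mb{R})$. For a complex abelian variety every $(1,1)$-class is algebraic, so the natural map $\mr{N}^1(A)_{\mb{R}}\hookrightarrow H^2(A,\mb{R})$ has image exactly $H^{1,1}(A,\mb{R})$. Hence $|\beta_i|^2$ is an eigenvalue of $f^*$ acting on $\mr{N}^1(A)_{\mb{R}}$, and so
\[\rho(f)^2=\max_i|\beta_i|^2\leq \rho\bigl(f^*|_{\mr{N}^1(A)_{\mb{R}}}\bigr)=\lambda_1(f),\]
which is the claim.

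The only real obstacle is verifying in part (iii) that the eigenvalues $\beta_i$ of the Lie-algebra representation coincide (up to complex conjugation) with the eigenvalues of $f$ in the algebraic sense used to define $\rho(f)$, and that the surjection $\mr{N}^1(A)_{\mb{R}}\to H^{1,1}(A,\mb{R})$ is an isomorphism after extending scalars; both are standard for complex abelian varieties but deserve a sentence. Parts (i) and (ii) are essentially formalities once the right tools are invoked.
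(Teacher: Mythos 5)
Parts (i) and (ii) are fine and match the paper's approach (the paper states the same facts more tersely; your Tate-module justification for (i) and theorem-of-the-square justification for (ii) are the standard ways to fill them in). One small logical slip in (i): having the same characteristic polynomial does not in general imply having the same minimal polynomial; but the transpose of a linear endomorphism has the same \emph{minimal} polynomial directly (same annihilating ideal), so the conclusion stands — just phrase it that way.

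Part (iii) contains a genuine error. You assert that ``for a complex abelian variety every $(1,1)$-class is algebraic, so $\mathrm{N}^1(A)_{\mathbb{R}}\hookrightarrow H^2(A,\mathbb{R})$ has image exactly $H^{1,1}(A,\mathbb{R})$.'' This is false. The Lefschetz $(1,1)$-theorem identifies $\mathrm{NS}(A)$ with $H^2(A,\mathbb{Z})\cap H^{1,1}(A)$, i.e.\ every \emph{integral} $(1,1)$-class is algebraic; it does \emph{not} give $\mathrm{NS}(A)_{\mathbb{R}}=H^{1,1}(A,\mathbb{R})$. For a very general abelian variety of dimension $g\geq2$ the Picard number is $1$ while $\dim H^{1,1}=g^2$, so the inclusion is strict. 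Consequently you cannot conclude that $|\beta_i|^2$ is an eigenvalue of $f^*$ acting on $\mathrm{N}^1(A)_{\mathbb{R}}$: an eigenvalue on the ambient $H^{1,1}(A,\mathbb{R})$ need not restrict to an eigenvalue on the invariant subspace $\mathrm{NS}(A)_{\mathbb{R}}$.

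The repair is to avoid descending to $\mathrm{N}^1$ at all, which is exactly what the paper does. Keep the Hodge-theoretic computation showing each $|\beta_i|^2=\beta_i\overline{\beta_i}$ is an eigenvalue of $f^*$ on $H^{1,1}(A,\mathbb{R})$, and then invoke the fact (cited in the paper to Truong) that for a surjective endomorphism $f$ of a smooth complex projective variety, $\lambda_1(f)$ equals the spectral radius of $f^*$ on $H^{1,1}$. That yields
\[
\rho(f)^2=\max_i|\beta_i|^2\leq \rho\bigl(f^*|_{H^{1,1}(A,\mathbb{R})}\bigr)=\lambda_1(f)
\]
with no appeal to algebraicity of $(1,1)$-classes.
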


\begin{proof}
We may assume that the base field $K$ is algebraically closed. Then part (1) is true because $f$ and $f^{\vee}$ have the same minimal polynomial. Part (2) is true because pull-back by translations do not change the numerical class of line bundles. To prove part (3), we may assume that $K=\mathbb{C}$ by the Lefschetz principle.

Now every root of the minimal polynomial of $f$ is an eigenvalue of the linear map $f^*:H^{1}(A,\mathbb{C})\rightarrow H^{1}(A,\mathbb{C})$. We know $H^{1}(A,\mathbb{C})=H^{1,0}(A)\oplus H^{0,1}(A)$. So for every root $\alpha$ of the minimal polynomial of $f$, we see that $\alpha\bar{\alpha}=|\alpha|^2$ is an eigenvalue of the linear map $f^*:H^{1,1}(A)\rightarrow H^{1,1}(A)$. But in fact $\lambda_1(f)$ is the spectral radius of $f^*:H^{1,1}(A)\rightarrow H^{1,1}(A)$ (for example, see \cite[top of p. 72]{Tru25}). Hence the result follows.
\end{proof}

\subsection{Arithmetic degrees}\label{subsec_arithdeg}

\ 

In Introduction, we have described the concept of arithmetic degrees. In this subsection, we will recall some of its properties. But firstly, we shall review the concept of Moriwaki heights. For short, it is a height machinery over finitely generated fields that admits all good properties as for the Weil height over number fields. We will only make a brief review and refer to \cite{Mor00} for details.

Let $K$ be a finitely generated field over $\mathbb{Q}$. Let $B$ be a normal arithmetic variety, which is a model of $K$. We fix a big and nef hermitian line bundle $\overline{H}$ on $B$. Let $X$ be a projective variety over $K$. Then we can define a group homomorphism from $\mathrm{Pic}(X)$ to the $\mathbb{R}$-vector space $\frac{\{\textrm{functions}\ X(\overline{K})\rightarrow\mathbb{R}\}}{\{\textrm{bounded functions}\}}$ (denote as $L\mapsto h_L$) in the following way.

Let $L\in\mathrm{Pic}(X)$. Let $\pi:\mathcal{X}\rightarrow B$ be a model of $X$, in which $\mathcal{X}$ is an arithmetic variety. By choosing $\mathcal{X}$ carefully, we can find a line bundle $\mathcal{L}\in\mathrm{Pic}(\mathcal{X})$ that restricts to $L\in\mathrm{Pic}(X)$. We form $\mathcal{L}$ into a hermitian line bundle $\overline{\mathcal{L}}$ over $\mathcal{X}$. Then we define $h_L$ by the formula $h_L(x)=\frac{\overline{\mathcal{L}}\cdot\pi^*\overline{H}^d\cdot\overline{x}}{[K(x):K]}$ for every $x\in X(\overline{K})$, in which $\overline{x}$ is the closure of $x$ in $\mathcal{X}$ and $d$ is the transcendence degree of $K/\mathbb{Q}$. This notion is well-defined (i.e. independent of the choice of $(\mathcal{X},\overline{\mathcal{L}})$) up to bounded functions. Notice that we have fixed $(B,\overline{H})$ in the procedure above. Since we will never modify these data, we do not emphasize them in the notion.

By tensoring $\mathbb{R}$, we get an $\mathbb{R}$-linear map $\mathrm{Pic}(X)_{\mathbb{R}}\rightarrow\frac{\{\textrm{functions}\ X(\overline{K})\rightarrow\mathbb{R}\}}{\{\textrm{bounded functions}\}}$, which we still denote as $L\mapsto h_L$. We list some properties of this map, following \cite[Proposition 3.3.7]{Mor00}. Notice that the projection formula here is a direct consequence of the projection formula in arithmetic intersection theory.

\begin{proposition}\label{Prop_ht}
Let $L\in\mathrm{Pic}(X)$.
\begin{enumerate}
\item
The function $h_L$ is bounded below on $(X\backslash\mathrm{Bs}(L))(\overline{K})$ in which $\mathrm{Bs}(L)$ is the base locus of $L$.

\item (Northcott)
If $L$ is ample, then the set $\{x\in X(\overline{K})|\ [K(x):K]\leq d,h_L(x)\leq C\}$ is finite for every positive integer $d$ and every $C>0$ (and every representative $h_L$). 

\item (projection formula)
Let $Y$ be another projective variety over $K$ and let $f:X\rightarrow Y$ be a morphism. Then for every $L\in\mathrm{Pic}(Y)_{\mathbb{R}}$, we have $h_{f^*L}(x)=h_L(f(x))+O(1)$ as functions on $X(\overline{K})$.
\end{enumerate}
\end{proposition}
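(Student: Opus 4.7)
The plan is to deduce all three properties by working on an arithmetic model $\pi:\mathcal{X}\rightarrow B$ of $X/K$ together with a hermitian extension $\overline{\mathcal{L}}$ of $L$, and then transferring standard facts from arithmetic intersection theory.

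For part (1), I would proceed by replacing $L$ with a positive multiple (which only rescales $h_L$) so that $L^{\otimes m}$ admits a global section $s\in H^0(X,L^{\otimes m})$ whose zero locus set-theoretically equals $\mathrm{Bs}(L)$. After possibly enlarging the model, $s$ extends to a section of $\overline{\mathcal{L}}^{\otimes m}$, and I choose the hermitian metric on $\overline{\mathcal{L}}$ so that $\|s\|_{\sup}\leq 1$ on every archimedean fiber of $\mathcal{X}\to\mathrm{Spec}\,\mathbb{Z}$. For any $x\in (X\setminus\mathrm{Bs}(L))(\overline{K})$, the horizontal cycle $\overline{x}$ is not contained in the zero locus of $s$, so the intersection number $\overline{\mathcal{L}}^{\otimes m}\cdot\pi^*\overline{H}^d\cdot\overline{x}$ can be computed via the arithmetic divisor of $s$, which decomposes into a geometric part (non-negative, as it is an effective cycle on $\overline{x}$ paired against the nef class $\pi^*\overline{H}^d$) and an archimedean contribution $-\log\|s\|_{\sup}\geq 0$. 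Dividing by $m[K(x):K]$ yields a uniform lower bound for $h_L(x)$.

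For part (2), the Northcott property, I would appeal to the arithmetic Hilbert--Samuel theorem for big hermitian line bundles together with the bigness of $\overline{H}$. After replacing $L$ with a large multiple one may assume $\mathcal{L}$ is relatively very ample over $B$, so the hermitian line bundle $\overline{\mathcal{L}}\otimes\pi^*\overline{H}^{\otimes N}$ on $\mathcal{X}$ is big for $N\gg 0$. A bound $h_L(x)\leq C$ with $[K(x):K]\leq d$ translates, via linearity in $L$, into a bound on the arithmetic degree of $\overline{x}$ against this big hermitian class. Arithmetic Hilbert--Samuel then produces enough small sections to force such cycles $\overline{x}$ to form a bounded family in $\mathcal{X}$; finite generation of the base ring and boundedness of the geometric fiber then leave only finitely many possibilities. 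Part (3), the projection formula, is the direct arithmetic analogue of the geometric one: spread $f:X\to Y$ to a morphism $\tilde f:\mathcal{X}\to\mathcal{Y}$ over $B$ (blowing up $\mathcal{X}$ if necessary so that $\tilde f$ is everywhere defined), extend $L$ to $\overline{\mathcal{L}_Y}$, and take the representative $\overline{\mathcal{L}_X}:=\tilde f^*\overline{\mathcal{L}_Y}$ of $f^*L$. The projection formula in arithmetic intersection theory gives
\[
\tilde f^*\overline{\mathcal{L}_Y}\cdot\pi_X^*\overline{H}^d\cdot\overline{x}=\overline{\mathcal{L}_Y}\cdot\pi_Y^*\overline{H}^d\cdot\tilde f_*\overline{x},
\]
and $\tilde f_*\overline{x}=[K(x):K(f(x))]\cdot\overline{f(x)}$, yielding $h_{f^*L}(x)=h_L(f(x))$ for this specific representative. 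Changing to any other representative of $f^*L$ alters $h_{f^*L}$ only by a bounded function, producing the $O(1)$ discrepancy.

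The main obstacle will be part (2): unlike the classical Weil case over a number field, where Northcott finiteness follows from a direct lattice-point count on projective coordinates, the Moriwaki setting has no such tautological bound on the horizontal cycles. One must genuinely exploit the bigness of $\overline{H}$ on $B$ through an asymptotic Riemann--Roch argument to produce enough small arithmetic sections, so this is where the nontrivial analytic-arithmetic input of Moriwaki's machinery is invoked.
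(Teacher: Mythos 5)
There is a genuine gap, and it sits exactly where you predicted the difficulty would be: part (2). Note first that the paper does not prove this proposition at all; it cites \cite[Proposition 3.3.7]{Mor00} and only remarks that part (3) is a direct consequence of the projection formula in arithmetic intersection theory --- your treatment of part (3) is fine and matches that remark. For part (2), however, your chain ``$h_L(x)\leq C$ and $[K(x):K]\leq d$ $\Rightarrow$ the cycles $\overline{x}$ form a bounded family in $\mathcal{X}$ $\Rightarrow$ finitely many possibilities'' does not work. The closures $\overline{x}$ are horizontal cycles of dimension $\dim B=\operatorname{trdeg}(K/\mathbb{Q})+1$, and a bounded family of such cycles is typically infinite: for instance, the $K$-rational points of $\mathbb{P}^1_K$ all lie in one bounded family, and when $K$ has positive transcendence degree there is no tautological lattice-point count that rules out infinitely many of them having small arithmetic degree against $\overline{\mathcal{L}}\cdot\pi^*\overline{H}^d$. ``Finite generation of the base ring'' does not rescue this. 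The actual content of Moriwaki's Northcott theorem is a genuine reduction to the classical number-field case: one uses the arithmetic bigness of $\overline{H}$ to produce strictly small sections of large powers $n\overline{H}$ on $B$, intersects with their zero loci to cut the base down (an induction on $\dim B$), and compares the Moriwaki height with heights over the smaller bases, ultimately with ordinary Weil heights over number fields where Northcott applies. Invoking ``arithmetic Hilbert--Samuel produces enough small sections'' without this inductive comparison step leaves the finiteness unproved; this is precisely why the paper defers to \cite{Mor00} rather than sketching it.

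There is also a smaller, fixable error in your part (1): a nonzero global section of a line bundle has zero locus of pure codimension one, so no section $s$ of $L^{\otimes m}$ can have zero locus set-theoretically equal to $\mathrm{Bs}(L)$ when the base locus has codimension at least two. The standard fix is to take a basis $s_1,\dots,s_k$ of $H^0(X,L)$ (or finitely many sections of a multiple whose common zero locus is contained in $\mathrm{Bs}(L)$), run your argument --- effectivity of the horizontal part paired against the nef class $\pi^*\overline{H}^d$ plus the archimedean term $-\log\|s_i\|_{\sup}$ after rescaling metrics --- for each $s_i$ separately, and take the minimum of the finitely many lower bounds; any $x\notin\mathrm{Bs}(L)$ avoids the zero locus of at least one $s_i$. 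With that correction part (1) is sound, and part (3) is correct as written.
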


Now we return to the setting in the Introduction and review some properties of the arithmetic degree. Let $X$ be a geometrically connected normal projective variety over $K$. Let $f$ be a surjective endomorphism of $X$, and let $x\in X(\overline{K})$ be a point such that the orbit $\mathcal{O}_f(x)$ is Zariski dense in $X$. One can learn the following results from \cite{KS16,Sil17,Oh22}.

\begin{proposition}\label{Prop_arithdeg}
\begin{enumerate}
\item
We have $\alpha_f(x)\leq\lambda_1(f)$.
\item
If $X$ is an abelian variety, then $\alpha_f(x)=\lambda_1(f)$.
\end{enumerate}
\end{proposition}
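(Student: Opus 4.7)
The plan for part (1) is to establish a uniform exponential upper bound on $h_L(f^n(y))$ and then take $n$-th roots. The first dynamical degree $\lambda_1(f)$ coincides with the spectral radius of $f^*$ on $\mathrm{N}^1(X)_{\mathbb{R}}$, so for every $\epsilon>0$ there is a constant $C_\epsilon$ and an integer $r$ with $\|(f^n)^*\|\leq C_\epsilon n^r(\lambda_1(f)+\epsilon)^n$ in any fixed norm on $\mathrm{N}^1(X)_{\mathbb{R}}$. For ample $L$, this means $(f^n)^*L$ is dominated in $\mathrm{N}^1(X)_{\mathbb{R}}$ by a class of the form $C_\epsilon n^r(\lambda_1(f)+\epsilon)^n\,L$ modulo nef. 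Combining this with the projection formula $h_{(f^n)^*L}=h_L\circ f^n+O(1)$ of Proposition \ref{Prop_ht}(3) and the lower bound on heights from Proposition \ref{Prop_ht}(1) yields
\[
h_L(f^n(x))\leq C'_\epsilon(\lambda_1(f)+\epsilon)^n\max\{1,h_L(x)\}+O(1).
\]
Taking $n$-th roots, passing to $\limsup$ as $n\to\infty$, and letting $\epsilon\to 0$ then gives $\alpha_f(x)\leq\lambda_1(f)$.

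For part (2), I would use the canonical-height machinery on abelian varieties. Decompose $f=\tau_a\circ\phi$ with $\phi$ an isogeny; by Proposition \ref{Prop_abel}(2), $\lambda_1(f)=\lambda_1(\phi)$. Fix a symmetric ample $L$ on $A$. Since translations act trivially on $\mathrm{N}^1(A)_{\mathbb{R}}$, we have $f^*L\equiv\phi^*L$ numerically, so the iterated classes $(f^n)^*L$ and $(\phi^n)^*L$ agree in $\mathrm{N}^1(A)_{\mathbb{R}}$. Decomposing $\mathrm{N}^1(A)_{\mathbb{R}}$ along the generalized $\phi^*$-eigenspaces, I would build the associated N\'eron--Tate-type canonical heights $\hat h_\alpha$, which satisfy $\hat h_\alpha\circ\phi=\alpha\,\hat h_\alpha$ up to polynomial factors in $n$ and together recover $h_L$ up to bounded error. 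This reduces the problem to showing that the component of $x$ in the top ($\alpha=\lambda_1(f)$) eigendirection is nonzero.

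This last step is the main obstacle. If that component vanished, the iterates $\phi^n(x)$, and hence (after absorbing the accumulating translation contributions arising from the identity $f^n=\tau_{a_n}\circ\phi^n$) the iterates $f^n(x)$, would be confined to a proper $f$-invariant abelian subvariety of $A$ cut out by the vanishing of the top eigenheight, contradicting the Zariski density of $\mathcal{O}_f(x)$. Once this non-vanishing is established, $h_L(f^n(x))$ grows on the order of $\lambda_1(f)^n$, so $\alpha_f(x)\geq\lambda_1(f)$, and combining with part (1) yields $\alpha_f(x)=\lambda_1(f)$.
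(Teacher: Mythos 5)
The paper does not actually prove this proposition; it is presented as a recollection and the text simply cites \cite{KS16,Sil17,Oh22}. So the comparison is against the literature's argument rather than something in this paper.

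Your part (1) is the standard spectral-radius bound, but the sketch glosses over a genuine technical point: the $O(1)$ error in the projection formula $h_L\circ f^n = h_{(f^n)^*L}+O(1)$ depends on $n$ (it comes from comparing two height representatives for $(f^n)^*L$), and the inequality ``$(f^n)^*L$ dominated by $C_\epsilon n^r(\lambda_1+\epsilon)^n L$ modulo nef'' likewise produces an $n$-dependent constant when converted to a height inequality. Controlling these constants is the whole difficulty; the argument in \cite{KS16} handles it by fixing a basis of ample classes, tracking the $\mathrm{Pic}^0$-component of $f^*$ (which contributes a term of size $O(\sqrt{h_L})$ at each step), and running an explicit recursion. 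Your one-line ``take $n$-th roots and let $\epsilon\to 0$'' only works after that recursion is established.

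Part (2) has a real gap at exactly the step you flag as the ``main obstacle.'' The proposed fix — that if the component of $x$ in the top $\phi^*$-eigendirection vanishes, then the orbit is ``confined to a proper $f$-invariant abelian subvariety of $A$ cut out by the vanishing of the top eigenheight'' — is false, because the zero set of a canonical height on an abelian variety is not a Zariski closed subset. The basic example is $A$ an elliptic curve, $f=[2]$, and $\hat h$ the N\'eron--Tate height of a symmetric ample bundle: $\hat h$ vanishes precisely on the torsion subgroup, which is infinite and Zariski dense. More generally, a canonical height $\hat h_D$ attached to a top-eigenvalue class $D$ is a (possibly degenerate) quadratic form plus a linear form on $A(\ov{K})$, and its zero set is not a subvariety; moreover $D$ need not be ample or even effective, so one cannot invoke positive-definiteness of the N\'eron--Tate pairing. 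Consequently Zariski density of $\mathcal{O}_f(x)$ does not directly yield nonvanishing of the top component. The actual argument in \cite{Sil17} is substantially more delicate: it decomposes $A$ up to isogeny, constructs a suitable nondegenerate canonical height, and relates its vanishing to properties of the orbit using the theory of N\'eron--Tate heights on $A(\ov{K})\otimes\mathbb{R}$ together with structural results about the group generated by the translation and isogeny parts. Your ``absorbing the accumulating translation contributions'' step also needs care, since the translations $a_n = a + \phi(a) + \cdots + \phi^{n-1}(a)$ are not bounded when $1$ is an eigenvalue of $\phi$, and conjugating away the translation part only works when $1$ is not an eigenvalue.
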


We state the following lemma for future use. The philosophy is that big divisors can control the ample ones on an open dense set.

\begin{lemma}\label{Lem_bight}
Let $B \in\mathrm{Pic}(X)_{\mathbb{R}}$ be a big $\mathbb{R}$-divisor. Then there exists an infinite sequence of integers $n_1 < n_2 < \cdots$ such that
\[
\alpha_f(x) = \lim_{i \to \infty} h_B^+\big(f^{n_i}(x)\big)^{\frac{1}{n_i}},
\]
where $h_B^+ := \max\{h_B, 1\}$. Furthermore, the set $\{h_B^+(f^{n_i}(x)) \mid i \in \mathbb{Z}_{>0} \}$ is not bounded above.
\end{lemma}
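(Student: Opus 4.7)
The plan is to sandwich $h_B^+$ between two ample heights and then, via Northcott combined with the Zariski density of $\mathcal{O}_f(x)$, to extract a subsequence along which both bounds converge to $\alpha_f(x)$. Fix an ample line bundle $A$ on $X$, so that $\alpha_f(x) = \lim_n h_A^+(f^n(x))^{1/n}$ by independence of the ample choice. Since $B \in \mathrm{Pic}(X)_{\mathbb{R}}$ is fixed, for $N$ sufficiently large both $NA \pm B$ are ample; applying Proposition \ref{Prop_ht}(1) to $NA - B$ yields $h_B \leq N h_A + O(1)$, and since $h_A^+ \geq 1$ we deduce $h_B^+ \leq C \cdot h_A^+$ for some constant $C > 0$. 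This already gives the upper bound $\limsup_n h_B^+(f^n(x))^{1/n} \leq \alpha_f(x)$.

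For the matching lower bound, I would invoke Kodaira's lemma for big $\mathbb{R}$-divisors to write $B \equiv A_0 + E$ with $A_0$ ample and $E$ effective, so that $h_B = h_{A_0} + h_E + O(1)$. Because $\mathrm{Bs}(\mathcal{O}(E)) \subseteq \mathrm{Supp}(E)$, a second application of Proposition \ref{Prop_ht}(1) gives $h_E \geq -C_0$ off $\mathrm{Supp}(E)$, and hence $h_B(y) \geq h_{A_0}(y) - C_0'$ for every $y \in X(\overline{K}) \setminus \mathrm{Supp}(E)$.

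The crux is to construct $n_1 < n_2 < \cdots$ satisfying \emph{simultaneously} $f^{n_i}(x) \notin \mathrm{Supp}(E)$ and $h_{A_0}(f^{n_i}(x)) \to \infty$. Zariski density of $\mathcal{O}_f(x)$ forces $N := \{n : f^n(x) \notin \mathrm{Supp}(E)\}$ to be infinite, since otherwise the orbit would be confined to the proper closed subset $\mathrm{Supp}(E) \cup \{f^n(x) : n \notin N\}$. Density is then invoked a second time through Northcott's theorem (Proposition \ref{Prop_ht}(2)): if $h_{A_0}$ were bounded on $\{f^n(x) : n \in N\}$, this set would be finite and the orbit would again be contained in a proper closed subset; thus $h_{A_0}$ is unbounded on this set, and we may extract $n_i \in N$ with $h_{A_0}(f^{n_i}(x)) \to \infty$. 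Along such $n_i$, $h_B(f^{n_i}(x)) \geq h_{A_0}(f^{n_i}(x)) - C_0' \to \infty$, which yields the second assertion, and the squeeze
\[(h_{A_0}(f^{n_i}(x)) - C_0')^{1/n_i} \leq h_B^+(f^{n_i}(x))^{1/n_i} \leq (C \cdot h_A^+(f^{n_i}(x)))^{1/n_i}\]
between two quantities tending to $\alpha_f(x)$ yields the first.

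The chief obstacle is precisely the simultaneous control of the two competing constraints in the subsequence construction, resolved by the twin use of Zariski density---once to visit $X \setminus \mathrm{Supp}(E)$ infinitely often, and once to activate Northcott's finiteness on that complement.
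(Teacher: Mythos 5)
Your proof is correct and follows essentially the same route as the paper's: decompose $B = A + E$ with $A$ ample and $E$ effective via Kodaira's lemma, use Zariski density to find infinitely many iterates avoiding $\operatorname{Supp}(E)$, sandwich $h_B$ between two ample heights there, and invoke Northcott for the unboundedness. The only organizational difference is that you build the constraint $h_{A_0}(f^{n_i}(x)) \to \infty$ directly into the choice of subsequence (making the unboundedness and the lower bound immediate), whereas the paper first fixes the subsequence only by avoidance of $\operatorname{Supp}(E)$ and then deduces unboundedness separately by a Northcott contradiction; both are valid and equally elementary.
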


\begin{proof}
Since $B$ is big, there exist divisors $A, E \in \mathrm{Pic}(X)_{\mathbb{R}}$ such that $A$ is ample, $E$ is effective, and $B = A + E$.

Since the $f$-orbit of $x$ is Zariski dense in $X$, there exists a sequence $n_1 < n_2 < \cdots$ such that $f^{n_i}(x) \notin \operatorname{Supp}(E)(K)$ for all $i$. For such $n_i$, we have
\[
h_B(f^{n_i}(x)) = h_A(f^{n_i}(x)) + h_E(f^{n_i}(x)) + O(1) \geq h_A(f^{n_i}(x)) + O(1).
\]
It follows that
\[
\lim_{i \to \infty} h_B^+\big(f^{n_i}(x)\big)^{\frac{1}{n_i}} \geq \lim_{i \to \infty} h_A^+\big(f^{n_i}(x)\big)^{\frac{1}{n_i}} = \alpha_f(x).
\]

On the other hand, since $A$ is ample, there exists $m > 0$ such that $mA - B$ is ample. Then
\[
h_{mA}(f^{n_i}(x)) \geq h_B(f^{n_i}(x)) + O(1),
\]
and thus,
\[
\lim_{i \to \infty} h_B^+\big(f^{n_i}(x)\big)^{\frac{1}{n_i}} \leq \lim_{i \to \infty} h_{mA}^+\big(f^{n_i}(x)\big)^{\frac{1}{n_i}} = \alpha_f(x).
\]

Combining the inequalities above, we conclude that
\[
\lim_{i \to \infty} h_B^+\big(f^{n_i}(x)\big)^{\frac{1}{n_i}} = \alpha_f(x).
\]

Finally, suppose for contradiction that $h_B^+(f^{n_i}(x)) \leq C_1$ for all $i$ and some constant $C_1 > 0$. Then $h_A(f^{n_i}(x)) \leq C_2$ for some $C_2$ by the comparison above. By the Northcott property (applied to the ample divisor $A$), this would imply that the set $\{f^{n_i}(x)\}$ is finite. So $x\in \mr{Prep}(f)$, contradicting the Zariski density of the $f$-orbit of $x$. Hence, $\{h_B^+(f^{n_i}(x))\}$ is not bounded above.
\end{proof}

\section{Proof of Theorem \ref{Thm_main}}\label{Sec_proof}

In this section, we prove Theorem \ref{Thm_main}. We follow the setting in the Section \ref{Sec_intro}. Let $K$ be a finitely generated field over $\mathbb{Q}$ and let $X$ be a geometrically connected normal projective variety over $K$. Let $f$ be a surjective endomorphism of $X$ and let $x\in X(\overline{K})$ be a point with a Zariski dense $f$-orbit. We fix a datum $(B,\overline{H})$ (see subsection \ref{subsec_arithdeg}) to run the Moriwaki height machinery.

Let $\mathrm{Pic}^0(X)$ be the identity component of the Picard scheme of $X$, which is an abelian variety over $K$. Sometimes we do not distinguish $\mathrm{Pic}^0(X)$ with $\mathrm{Pic}^0(X)(K)$. We have an exact sequence $0\rightarrow\mathrm{Pic}^0(X)_{\mathbb{R}}\rightarrow\mathrm{Pic}(X)_{\mathbb{R}}\rightarrow\mathrm{N}^1(X)_{\mathbb{R}}\rightarrow 0$. The endomorphism $f$ induces a pull-back map $f^*$ on each of them. In particular, we have a self-isogeny $f^*:\mathrm{Pic}^0(X)\rightarrow\mathrm{Pic}^0(X)$.

Next, we fix some notations on linear algebra. Let $V$ be a real vector space of dimension $n$, and let $g: V\ra V$ be a linear map. For $\mu\in \mb{R}$, define 
\[E_\mu:=\ker\big((g-\mu\cdot\mr{id})^n:V\ra V\big)\subset V.\]
We will mainly use this notation towards $f^*:\mathrm{N}^1(X)_{\mathbb{R}}\ra\mathrm{N}^1(X)_{\mathbb{R}}$.

We start with a lifting lemma.

\begin{lemma}\label{Lem_lift}
Let $p: \mr{Pic}(X)_{\mb{R}}\ra \mr{N}^1(X)_{\mb{R}}$ be the natural projection. Let $\rho_0$ denote the spectral radius of the self-isogeny $f^*: \mr{Pic}^0(X)\ra \mr{Pic}^0(X)$. Then there exists a lifting $\iota: \bigoplus\limits_{\mu>\rho_0}E_{\mu}\ra \mr{Pic}(X)_{\mb{R}}$ such that $p\circ\iota$ is the natural inclusion and $\iota\circ f^*=f^*\circ\iota$. The last two $f^*$ are pull-backs on $\bigoplus\limits_{\mu>\rho_0}E_{\mu}\subseteq\mathrm{N}^1(X)_{\mathbb{R}}$ and $\mathrm{Pic}(X)_{\mathbb{R}}$, respectively.
\end{lemma}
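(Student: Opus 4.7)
The plan is to construct $\iota$ eigenspace-by-eigenspace via a purely linear-algebraic argument applied to the $f^*$-equivariant short exact sequence $0\ra\mr{Pic}^0(X)_{\mb{R}}\ra\mr{Pic}(X)_{\mb{R}}\ra\mr{N}^1(X)_{\mb{R}}\ra0$. The key observation will be that for each real $\mu>\rho_0$, the generalized $\mu$-eigenspace of $f^*$ on $\mr{Pic}(X)_{\mb{R}}$ maps isomorphically onto $E_\mu\subseteq\mr{N}^1(X)_{\mb{R}}$ under $p$; defining $\iota|_{E_\mu}$ to be the inverse of this isomorphism and summing over $\mu$ will produce the desired lifting, with $p\circ\iota$ the natural inclusion and the $f^*$-equivariance automatic because $f^*$ preserves each generalized eigenspace.

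To set things up, I would first record that $f^*$ acts on the finite-dimensional real vector space $\mr{Pic}^0(X)_{\mb{R}}$ (viewed as the Lie algebra of the abelian variety $\mr{Pic}^0(X)$) with eigenvalues equal to the roots of the minimal polynomial of the self-isogeny $f^*$ on $\mr{Pic}^0(X)$, so that the spectral radius of $f^*|_{\mr{Pic}^0(X)_{\mb{R}}}$ equals $\rho_0$. In particular, any $\mu>\rho_0$ is not an eigenvalue of $f^*|_{\mr{Pic}^0(X)_{\mb{R}}}$, so $(f^*-\mu)^N$ is invertible on $\mr{Pic}^0(X)_{\mb{R}}$ for every $N\geq1$. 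Fix such a $\mu$, set $N=\dim_{\mb{R}}\mr{Pic}(X)_{\mb{R}}$, and consider the $f^*$-stable subspace
\[V_\mu:=\ker\!\left((f^*-\mu)^N\colon\mr{Pic}(X)_{\mb{R}}\ra\mr{Pic}(X)_{\mb{R}}\right).\]
I would then check that $p$ restricts to an isomorphism $V_\mu\cra E_\mu$. Injectivity is immediate, since $V_\mu\cap\mr{Pic}^0(X)_{\mb{R}}=\ker((f^*-\mu)^N|_{\mr{Pic}^0(X)_{\mb{R}}})=0$. For surjectivity, given $w\in E_\mu$ I would pick any lift $v\in\mr{Pic}(X)_{\mb{R}}$; then $(f^*-\mu)^N v$ projects to $(f^*-\mu)^N w=0$, hence lies in $\mr{Pic}^0(X)_{\mb{R}}$, so by invertibility there exists $u\in\mr{Pic}^0(X)_{\mb{R}}$ with $(f^*-\mu)^N u=(f^*-\mu)^N v$; then $v-u\in V_\mu$ and $p(v-u)=w$.

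Finally, I would define $\iota|_{E_\mu}:=(p|_{V_\mu})^{-1}$ and extend by direct sum over all $\mu>\rho_0$. That $p\circ\iota$ coincides with the natural inclusion is automatic from $p(\iota(w))=w$, and $\iota\circ f^*=f^*\circ\iota$ holds because both $V_\mu$ and $E_\mu$ are $f^*$-stable and $p$ is $f^*$-equivariant. The main obstacle I anticipate is the very first step: pinning down the precise meaning of $\mr{Pic}^0(X)_{\mb{R}}$ and verifying that the induced $f^*$-action on it has spectral radius exactly $\rho_0$. Once this is in hand, the remainder is a formal consequence of the generalized eigenspace decomposition applied to the $f^*$-equivariant extension.
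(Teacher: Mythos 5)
Your proof is correct, but it takes a genuinely different route from the paper's. You construct the lifting by a direct linear--algebraic splitting: you show that the projection $p$ restricts to an isomorphism from the generalized $\mu$-eigenspace $V_\mu$ of $f^*$ on $\mr{Pic}(X)_{\mb{R}}$ onto $E_\mu \subseteq \mr{N}^1(X)_{\mb{R}}$, using that $\mu>\rho_0$ forces $(f^*-\mu)^N$ to be invertible on $\ker p = \mr{Pic}^0(X)_{\mb{R}}$. The paper instead starts from an arbitrary lift $\mib{L}$ of a Jordan basis and produces $\iota$ as the limit of the telescoping sequence $((f^*)^n\mib{L})\Lambda^{-n}$, with convergence driven by the same spectral gap $\rho_0<\mu$; this is a Tate-style limiting construction in the spirit of canonical heights. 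Your version is cleaner and more self-contained; the paper's version has the advantage of being formally parallel to the canonical-height limit $\lim_n (\mib{h}_{\mib{L}}\circ f^n)\Lambda^{-n}$ that appears immediately afterward in the proof of Theorem \ref{Thm_main}. Both arguments rest on the same two pillars: finite-dimensionality of $\mr{Pic}^0(X)_{\mb{R}}$, and the bound $\rho(f^*|_{\mr{Pic}^0(X)_{\mb{R}}})\leq\rho_0$.

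Two small corrections to your write-up. First, $\mr{Pic}^0(X)_{\mb{R}}$ in this paper is not the Lie algebra of the Picard variety: it is $\mr{Pic}^0(X)(K)\otimes_{\mb{Z}}\mb{R}$ (the paper explicitly identifies $\mr{Pic}^0(X)$ with its $K$-points), which is finite-dimensional because $K$ is finitely generated over $\mb{Q}$ and hence $\mr{Pic}^0(X)(K)$ is a finitely generated abelian group by the Lang--N\'eron theorem. Second, the eigenvalues of $f^*$ on this space are \emph{among} the roots of the minimal polynomial of the isogeny, not necessarily equal to them, so you should only claim $\rho(f^*|_{\mr{Pic}^0(X)_{\mb{R}}})\leq\rho_0$; this one-sided bound is exactly what your invertibility argument needs and is also what the paper uses.
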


\begin{proof}
It suffices to construct $\iota$ on each cyclic subspace of $E_\mu$ for $\mu > \rho_0$. Let $E \subset E_\mu$ be a cyclic subspace for $f^*: E_\mu \to E_\mu$, with basis $D_1, D_2, \ldots, D_r$ satisfying
\[f^*D_1\equiv \mu D_1+D_2, f^*D_2\equiv\mu D_2+D_3,\ldots, f^*D_r\equiv\mu D_r.\]

Define the matrix
\[
\Lambda = 
\begin{bmatrix}
\mu & 0 & 0 & \cdots & 0 \\
1 & \mu & 0 & \cdots & 0 \\
\vdots & \ddots & \ddots & \ddots & \vdots \\
0 & \cdots & 1 & \mu & 0 \\
0 & \cdots & 0 & 1 & \mu
\end{bmatrix}_{r\times r},
\]
so that
\[(f^*D_1,f^*D_2,\ldots,f^*D_r)=(D_1,D_2,\ldots,D_r)\Lambda.\]

Choose $L_i\in \mr{Pic}(X)_{\mb{R}}$ such that $p(L_i)=D_i$ for $i=1,2,\ldots,r$, and set $\mib{L}:=(L_1,L_2,\ldots,L_r)\in \mr{Pic}(X)_{\mb{R}}^{r}$. Let $V\subseteq\mr{Pic}(X)_{\mb{R}}$ be the finite dimensional $f^*$-invariant subspace generated by $L_1,\dots,L_r$. Then $\mib{M}:=f^*\mib{L}-\mib{L}\Lambda=(M_1,M_2,\ldots,M_r) \in (V\cap\mr{Pic}^0(X)_{\mb{R}})^r$.

We now show that the sequence $\big((f^*)^n\mib{L}\big)\Lambda^{-n}$ converges in $V^{r}$. Note that
\[
\begin{aligned}
&\big((f^*)^n\mib{L}\big)\Lambda^{-n}-\big((f^*)^{n-1}\mib{L}\big)\Lambda^{-(n-1)}\\
=&\big((f^{*})^{n-1}(f^*\mib{L}-\mib{L}\Lambda)\big)\Lambda^{-n}  \\
=&\big((f^{*})^{n-1}\mib{M}\big)\Lambda^{-n}. \\
\end{aligned}
\]

Notice that $\{(f^*)^n\mib{M}\}_{n\geq0}$ is a linear recurrence sequence, which admits the minimal polynomial of $f^*:\mr{Pic}^0(X)\rightarrow\mr{Pic}^0(X)$ as its characteristic polynomial. So the norm of $\big((f^{*})^{n-1}\mib{M}\big)\Lambda^{-n}$ is $O(n^{r+2g}(\frac{\rho_0}{\mu})^n)$, in which $g=\dim\Pic^0(X)$. It follows that the sequence $\big((f^*)^n\mib{L}\big)\Lambda^{-n}$ converges to some $\wt{\mib{L}}=(\wt{L}_1,\wt{L}_2,\ldots,\wt{L}_r)\in V^{r}$. By construction, we have $p(\wt{\mib{L}})=\mib{D}=(D_1,D_2,\ldots,D_r)\in\mr{N}^1(X)_{\mb{R}}^{r}$ and $f^*\wt{\mib{L}}=\wt{\mib{L}}\Lambda$.

Define $\iota(D_i):=\wt{L}_i, i=1,2,\ldots,r$. This extends linearly to a map $\iota: E\ra \mr{Pic}(X)_{\mb{R}}$. Repeating this construction for suitable cyclic subspaces of $\bigoplus\limits_{\mu>\rho_0}E_{\mu}$, we obtain the desired lifting $\iota:\bigoplus\limits_{\mu>\rho_0}E_{\mu} \ra \mr{Pic}(X)_{\mb{R}}$.
\end{proof}

Next, we give an upper bound of height growth. We first state an elementary lemma and omit its proof, as the proof is verbatim with \cite[Lemma 3.2]{XY}.

\begin{lemma}\label{Lem_poly}
Let $\{x_n\}_{n\geq0}$ be a sequence of complex numbers. Let $P(x)=x^d+a_{d-1}x^{d-1}+\cdots+a_0$ be a polynomial with complex coefficients. Let $\rho=\max\{|z|:P(z)=0\}$. Suppose $|x_{n+d}+a_{d-1}x_{n+d-1}+\cdots+a_0x_n|=O(1)$. Then $|x_n|=O(n^d\rho^n)$ if $\rho\geq1$ and $|x_n|=O(1)$ if $\rho<1$.
\end{lemma}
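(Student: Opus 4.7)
The plan is to reformulate the approximate linear recurrence as an affine dynamical system on $\mathbb{C}^d$ and then control it through standard matrix-norm estimates. Setting $X_n := (x_n, x_{n+1}, \ldots, x_{n+d-1})^T$, letting $A$ denote the companion matrix of $P$, and writing $y_n := x_{n+d} + a_{d-1} x_{n+d-1} + \cdots + a_0 x_n$ (so that $|y_n| = O(1)$ by hypothesis), the recurrence becomes
\[
X_{n+1} = A X_n + y_n e_d,
\]
where $e_d$ is the last standard basis vector. Iterating yields the variation-of-constants formula
\[
X_n = A^n X_0 + \sum_{k=0}^{n-1} A^{n-1-k} e_d \, y_k,
\]
which reduces everything to controlling $\|A^m\|$ for a suitable operator norm.

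Next I would invoke the Jordan canonical form of $A$. The eigenvalues of the companion matrix are exactly the roots of $P$, so the spectral radius of $A$ equals $\rho$, and each Jordan block has size at most $d$. A standard computation on a Jordan block $J$ of size $k$ with eigenvalue $\lambda$ gives $\|J^m\| = O(m^{k-1} |\lambda|^m)$, hence $\|A^m\| = O(m^{d-1} \rho^m)$ when $\rho \geq 1$. When $\rho < 1$, picking any $\sigma \in (\rho, 1)$ gives $\|A^m\| = O(\sigma^m)$, and in particular $\|A^m\| = O(1)$.

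Finally, plug these bounds into the variation-of-constants formula. For $\rho \geq 1$, combining $\|A^n X_0\| = O(n^{d-1}\rho^n)$ with the convolution estimate
\[
\Big\|\sum_{k=0}^{n-1} A^{n-1-k} e_d y_k\Big\| \leq C\sum_{k=0}^{n-1} (n-1-k)^{d-1} \rho^{n-1-k}
\]
and an elementary geometric-series computation (where the polynomial prefactor absorbs the sum when $\rho = 1$ and the geometric tail dominates when $\rho > 1$) yields $\|X_n\| = O(n^d \rho^n)$. For $\rho < 1$, both the homogeneous term and the convolution with the bounded sequence $\{y_k\}$ are summable geometric series, giving $\|X_n\| = O(1)$. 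Reading off the first coordinate of $X_n$ gives the claimed bound on $|x_n|$. The only real bookkeeping step is the Jordan-block estimate and the case split at $\rho = 1$ in the geometric sum; neither is subtle, so the proof is essentially a direct calculation once the companion-matrix reformulation is in place.
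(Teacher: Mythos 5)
Your proof is correct, and it is self-contained, which is more than the paper offers: the authors omit the argument entirely, deferring to \cite[Lemma 3.2]{XY}, so there is no in-paper proof to compare against. Your route --- rewriting the perturbed recurrence as $X_{n+1}=AX_n+y_ne_d$ with $A$ the companion matrix, applying variation of constants, and bounding $\|A^m\|$ via the Jordan form --- is the standard way to prove such statements, and every step checks out: the homogeneous term contributes $O(n^{d-1}\rho^n)$, the convolution with the bounded forcing contributes $O(n^{d}\rho^{n})$ when $\rho\geq 1$ (the extra factor of $n$ coming from the $\rho=1$ case) and $O(1)$ when $\rho<1$ via a convergent geometric series with ratio $\sigma\in(\rho,1)$. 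Two trivial bookkeeping points, neither affecting validity: the per-block estimate $\|J^m\|=O(m^{k-1}|\lambda|^m)$ is literally false for a nilpotent block ($\lambda=0$, $1\leq m<k$) and for the $m=0$ term of your convolution sum where $(n-1-k)^{d-1}=0$; both are repaired by writing the bound as $\|A^m\|\leq C(m+1)^{d-1}\max\{\rho,1\}^m$ (valid since $\rho\geq1$ forces $m^{d-1}\rho^m\geq1$), after which your estimates go through verbatim. One could alternatively argue by induction on $d$, factoring $P(x)=(x-\lambda)Q(x)$ and applying the one-step estimate to the sequence $z_n:=Q(\text{shift})x_n$, which avoids Jordan forms at the cost of tracking the polynomial factor through the induction; your matrix formulation gets the same bound in one pass.
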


The next lemma is an immediate consequence of Lemma \ref{Lem_poly}.

\begin{corollary}\label{Cor_htbd}
Let $E\subset \mr{Pic}(X)_{\mb{R}}$ be a subspace invariant under $f^*$. Suppose that there exists a polynomial $P(x)\in\mathbb{R}[x]$ such that $P(f^*)=0$ on $E$. Let $d=\deg(P)$ and let $\rho=\max\{|z|:P(z)=0\}$. Then for any divisor $M\in E$ and any $x\in X(\ov{K})$, we have $|h_M(f^n(x))|=O(n^d\rho^n)$ if $\rho\geq1$ and $|h_M(f^n(x))|=O(1)$ if $\rho<1$.    
\end{corollary}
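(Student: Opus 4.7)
The plan is to reduce Corollary \ref{Cor_htbd} to Lemma \ref{Lem_poly} by converting the algebraic identity $P(f^*)M=0$ in $\mr{Pic}(X)_{\mb{R}}$ into a linear-recurrence estimate on the real sequence $x_n:=h_M(f^n(x))$. Without loss of generality I would assume $P$ is monic (divide by the leading coefficient, which does not change the roots) and write $P(t)=t^d+a_{d-1}t^{d-1}+\cdots+a_0$, so that $\sum_{k=0}^d a_k(f^*)^kM=0$ in $\mr{Pic}(X)_{\mb{R}}$.

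The first step is to apply the projection formula of Proposition \ref{Prop_ht}(3) iteratively to get
\[h_{(f^*)^kM}(y)=h_M(f^k(y))+O(1)\]
uniformly in $y\in X(\ov{K})$, with implicit constants depending on $k,M,f$ but not on $y$. Since the Moriwaki height is a well defined $\mb{R}$-linear map $\mr{Pic}(X)_{\mb{R}}\to\{\text{functions on }X(\ov{K})\}/\{\text{bounded}\}$, the vanishing $P(f^*)M=0$ forces the particular function $\sum_{k=0}^d a_kh_{(f^*)^kM}$ to be bounded on $X(\ov{K})$. Combining these two facts gives
\[\textstyle\sum_{k=0}^d a_kh_M(f^k(y))=O(1)\]
uniformly in $y\in X(\ov{K})$. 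Substituting $y=f^n(x)$ then produces the recurrence estimate $|x_{n+d}+a_{d-1}x_{n+d-1}+\cdots+a_0x_n|=O(1)$ with a constant independent of $n$.

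At this point Lemma \ref{Lem_poly} applies verbatim to the complex (real) sequence $\{x_n\}$ with the polynomial $P$, whose roots have maximum modulus $\rho$. It returns $|x_n|=O(n^d\rho^n)$ when $\rho\geq 1$ and $|x_n|=O(1)$ when $\rho<1$, which is exactly the conclusion of the corollary.

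The only delicate point, which is not really an obstacle, is to ensure that the $O(1)$ appearing in the recurrence is truly uniform in $n$. Since we invoke the projection formula only $d+1$ times (for $k=0,1,\ldots,d$), each individual error depends on $k, M, f$ but is uniform over the point at which the heights are evaluated; substituting $y=f^n(x)$ therefore preserves the uniformity, and no hidden dependence on $n$ creeps in.
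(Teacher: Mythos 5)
Your proposal is correct and is precisely the reduction the paper has in mind: the paper gives no explicit proof and simply calls Corollary~\ref{Cor_htbd} an ``immediate consequence'' of Lemma~\ref{Lem_poly}, and your argument---using the projection formula together with the linearity of the Moriwaki height map modulo bounded functions to convert $P(f^*)M=0$ into a uniform recurrence bound on $x_n=h_M(f^n(x))$, then invoking Lemma~\ref{Lem_poly}---is exactly that reduction spelled out.
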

Now we can prove Theorem \ref{Thm_main}.

\proof[Proof of Theorem \ref{Thm_main}]
Let $d=\dim(X)$. Recall that the cohomological dynamical degrees satisfy
\[\mu_1(f)\geq \mu_2(f)\geq \cdots\geq \mu_d(f)>0.\]
For convenience, we set $\mu_{d+1}(f):=0$.

Let $A = \mathrm{Alb}(X)$ be the Albanese variety of $X$, and let $\pi: X \to A$ be the Albanese morphism. Then $f:X \to X$ induces a morphism $g: A \to A$. Since the orbit $\mO_f(x)$ is Zariski dense, it follows from \cite[Proposition 3.7]{LM21} that $\pi$ is surjective. Consequently, the orbit $\mO_g(\pi(x))$ is Zariski dense in $A$. By Proposition \ref{Prop_arithdeg}(2), we have $\alpha_g(\pi(x))=\lambda_1(g)$, where $\lambda_1(g)$ is the first dynamical degree of $g$. If $A$ is trivial we set $\lambda_1(g):=0$. Since $\pi: X\ra A$ is surjective, we obtain 
\[\alpha_f(x)\geq \alpha_g(\pi(x))=\lambda_1(g).\] 

If $\alpha_f(x)=\lambda_1(g)$, then $\lambda_1(g)\geq 1$, and by Proposition \ref{Prop_Lya}(2), we have $\lambda_1(g)=\mu_1(g)\in \{\mu_1(f), \mu_2(f), \ldots,\mu_d(f)\}$. The conclusion follows. So assume $\alpha_f(x)>\lambda_1(g)$. 

Taking Proposition \ref{Prop_arithdeg}(1) into account, we suppose $\mu_{\ell}(f)>\alpha_f(x)>\mu_{\ell+1}(f)$ for some $\ell\in\{1,\dots,d\}$ by contradiction. Define $E:=\bigoplus\limits_{\mu_i(f)>\sqrt{\lambda_1(g)}}E_{\mu_i(f)}$ and $r:=\dim E$. Choose a basis $\mib{D}=(D_1,D_2,\ldots,D_r)$ of $E$ such that $f^*\mib{D}\equiv \mib{D} \Lambda$, where $\Lambda$ is a matrix of the form
\[
\Lambda = 
\begin{bmatrix}
J_{r_{1,1}}(\mu_1(f)) &   \cdots     & 0               & \cdots          & 0 \\
\vdots         &\ddots       & \vdots              & \cdots          & \vdots\\
0              & \cdots        & J_{r_{2,1}}(\mu_2(f)) & \cdots          & 0 \\
\vdots         &\vdots         & \vdots            & \ddots          & \vdots \\
0              & \cdots       & 0                 & \cdots          & J_{r_{\ell,m}}(\mu_{\ell}(f))
\end{bmatrix},
\]
with each $J_s(\lambda)$ a Jordan block:
\[J_{s}(\lambda) = 
\begin{bmatrix}
\lambda & 0 & 0 & \cdots & 0 \\
1 & \lambda & 0 & \cdots & 0 \\
\vdots & \ddots & \ddots & \ddots & \vdots \\
0 & \cdots & 1 & \lambda & 0 \\
0 & \cdots & 0 & 1 & \lambda
\end{bmatrix}_{s\times s}.\]

Since $\mr{Pic}^0(X)\cong A^{\vee}$, Proposition \ref{Prop_abel} implies $\rho(f^*:\Pic^0(X)\rightarrow\Pic^0(X))\leq \sqrt{\lambda_1(g)}$. Then Lemma \ref{Lem_lift} ensures that there exists a lifting $\iota: E\ra \mr{Pic}(X)_{\mb{R}}$ such that $p\circ \iota$ is the natural inclusion and $\iota\circ f^*=f^*\circ \iota$. Let $\mib{L}=(L_1,L_2,\ldots,L_r):=(\iota(D_1),\iota(D_2),\ldots,\iota(D_r))\in \mr{Pic}(X)_{\mb{R}}^{r}$, so that
\[f^*\mib{L}=\mib{L}\Lambda.\]

We take a vector of Moriwaki height functions $\mib{h}_{\mib{L}}=(h_{L_1},h_{L_2},\ldots, h_{L_r}): X(\ov{K})\ra\mb{R}^{r}$. By the functoriality of Moriwaki heights, we have $\mib{h}_{\mib{L}}\circ f=\mib{h}_{\mib{L}}\Lambda+O(1)$. By \cite[Theorem 5]{KS16}, the sequence $\big(\mib{h}_{\mib{L}}\circ f^n\big)\Lambda^{-n}$ pointwise converges to a canonical height vector $\widehat{\mib{h}}_{\mib{L}}=(\widehat{h}_{L_1},\ldots,\widehat{h}_{L_r}):X(\ov{K})\ra \mb{R}^{r}$ satisfying
\[\widehat{\mib{h}}_{\mib{L}}\circ f=\widehat{\mib{h}}_{\mib{L}}\Lambda,\]
and $\widehat{\mib{h}}_{\mib{L}}=\mib{h}_{\mib{L}}+O(1)$.

By Theorem \ref{Thm_Xie1.3}, we have $\mr{Big}(X)\cap \bigoplus\limits_{1\leq i\leq d}E_{\mu_i(f)}\neq \emptyset$. In other words, there exists a big class $B\in\mr{N}^1(X)_{\mb{R}}$ such that 
\[B=M_1+M_2,\]
where $M_1\in \bigoplus\limits_{1\leq i\leq \ell}E_{\mu_i(f)}$ and $M_2\in \bigoplus\limits_{\ell +1\leq i\leq d}E_{\mu_i(f)}$. Choose any lifting $\wt{M}_2\in \mr{Pic}(X)_{\mb{R}}$ of $M_2$, let $\wt{M}_1=\iota(M_1)$, and define $\wt{B}=\wt{M}_1+\wt{M}_2$. Then $\wt{B}$ is a big divisor.

Suppose $\wt{M}_1=a_1L_1+a_2L_2+\dots+a_rL_r$ with $\mib{a} = (a_1, \ldots, a_r)^{\top} \in \mb{R}^r$. We choose Moriwaki height functions $h_{\wt{B}}, h_{\wt{M}_1}$ and $h_{\wt{M}_2}$ so that $h_{\wt{B}}=h_{\wt{M}_1}+h_{\wt{M}_2}+O(1)$. Define the canonical height \[\widehat{h}_{\wt{M}_1}:=a_1\widehat{h}_{L_1}+a_2\widehat{h}_{L_2}+\dots+a_r\widehat{h}_{L_r},\]
which satisfies $\widehat{h}_{\wt{M}_1}=h_{\wt{M}_1}+O(1)$ and  $\widehat{h}_{\wt{M}_1}\circ f^n=\widehat{\mib{h}}_{\mib{L}}\Lambda^n\mib{a}$. Hence, we can express
\[\widehat{h}_{\wt{M}_1}(f^n(x))=\sum_{\substack{1\leq i\leq \ell\\ 0\leq k\leq r}}c_{i,k}n^k\mu_i^{n}\]
for some $c_{i,k}\in \mb{R}$. 

Next, we estimate the growth of $h_{\wt{M}_2}(f^n(x))$. Let
\[E^{\prime}:=\mr{span}\{(f^*)^n \wt{M}_2\mid n\geq 0\}\subset \mr{Pic}(X)_{\mb{R}},\]
which is a finite-dimensional subspace by \cite[Lemma 19]{KS16}. Since $\wt{M}_2$ lifts $M_2\in \bigoplus\limits_{\ell +1\leq i\leq d}E_{\mu_i}$,  we have
\[\rho_1:=\rho(f^*:E^{\prime}\ra E^{\prime})\leq \max\{\mu_{\ell+1}(f),\rho(f^*)\}\leq \max\{\mu_{\ell+1}(f), \sqrt{\lambda_1(g)}\}<\alpha_f(x),\]
in which $\rho(f^*)$ is the spectral radius of $f^*:\Pic^0(X)\rightarrow\Pic^0(X)$. Then by Corollary \ref{Cor_htbd}, we have $|h_{\wt{M}_2}(f^n(x))|=O(n^{\dim(E')}\rho_1^n)$ if $\rho_1\geq1$ and $|h_{\wt{M}_2}(f^n(x))|=O(1)$ if $\rho_1<1$.

By Lemma \ref{Lem_bight}, there exists a sequence $n_1<n_2<\cdots$ such that $\lim\limits_{j\ra\infty}h_{\wt{B}}^+(f^{n_j}(x))^{\frac{1}{n_j}}=\alpha_f(x)$ and the set $\{h_{\wt{B}}(f^{n_j}(x))\mid j\in \mb{Z}_{>0}\}$ is not bounded above.

If $c_{i,k}=0$ for all $1\leq i\leq \ell$ and $0\leq k\leq r$, then

\[h_{\wt{B}}(f^{n_j}(x))=h_{\wt{M}_2}(f^{n_j}(x))+O(1)=O(\max\{n_j^{\dim(E')}\rho_1^{n_j},1\}),\]

implying
\[\alpha_f(x)=\lim\limits_{j\ra\infty}h_{\wt{B}}^+(f^{n_j}(x))^{\frac{1}{n_j}}\leq \max\{\rho_1,1\}.\]

However, since $\rho_1<\alpha_f(x)$, it follows that $\alpha_f(x)=1$ hence $\rho_1<1$. This implies that $h_{\wt{B}}(f^{n_j}(x))$ is bounded above, hence a contradiction.

If some $c_{i,k}\neq 0$, then for some $1\leq i_0\leq \ell$ and $C_1>0$, we have

\[\widehat{h}_{\wt{M}_1}(f^{n_j}(x))\geq C_1\mu_{i_0}^{n_j},\quad \forall j\gg0.\]

 Since $\rho_1<\alpha_f(x)<\mu_{i_0}$, we find $C_2>0$ such that 

 \[h_{\wt{B}}(f^{n_j}(x))\geq C_2\mu_{i_0}^{n_j}+O(\max\{n_j^{\dim(E')}\rho_1^{n_j},1\})\geq C_2\mu_{i_0}^{n_j},\quad \forall j\gg0.\]

 It follows that

 \[\alpha_f(x)=\lim\limits_{j\ra\infty}h_{\wt{B}}^+(f^{n_j}(x))^{\frac{1}{n_j}}\geq\mu_{i_0}>\alpha_f(x),\]
 which is a contradiction. Thus, we finish the proof.
\endproof

\section{An application towards the dynamical Mordell--Lang conjecture}\label{Sec_app}

We aim to prove Corollary \ref{Cor_mordelllang} in this section. We first briefly introduce the dynamical Mordell--Lang (DML) conjecture. Unless otherwise specified, we let the base field be $\mb{C}$ in this section.

The DML conjecture is one of the core problems in the field of arithmetic dynamics. It asserts that in an algebraic dynamical system, the return set of an orbit into a Zariski closed subset must be a finite union of arithmetic progressions. We refer to \cite{BGT16,xieDML} for more on this problem.

In \cite{XY}, the second-named and third-named authors apply height arguments to study the return set of an orbit into a Zariski closed subset. Corollary \ref{Cor_mordelllang} is another application of this philosophy.

\proof[Proof of Corollary \ref{Cor_mordelllang}]
By a standard argument of considering normalizations, we may assume that $X$ and $Y$ are normal. Let $K\subseteq\mb{C}$ be a finitely generated subfield such that all the data (i.e. $X,Y,f,g,x,y,V$) are defined over $K$. We regard all these data as objects over $K$, and do not change their names by abusing notation. Then the three hypotheses are still valid, and our goal also does not change.

Assume by contradiction that $V\cap\mathcal{O}_{f\times g}((x,y))$ is dense in $V$. Same as in the previous section, we fix a datum $(B,\overline{H})$ for $K$ to run the Moriwaki height machinery. Then we can use Theorem \ref{Thm_main} to see that $\alpha_f(x)\neq\alpha_g(y)$. Without loss of generality, we assume that $1\leq\alpha_f(x)<\alpha_g(y)$. 

Let $p_1:V\ra X$ and $p_2:V\ra Y$ be two projections. We fix ample line bundles $L_1\in\Pic(X)$ and $L_2\in\Pic(Y)$, and fix Moriwaki height functions $h_{L_1}:X(\overline{K})\ra\mb{R}_{\geq1}$ and $h_{L_2}:Y(\overline{K})\ra\mb{R}_{\geq1}$. Since $p_1^*L_1$ is always big, we may choose $L_1$ appropriately such that $p_1^*L_1=A+E$ for some ample $A\in\Pic(V)$ and effective $E\in\Pic(V)$. By definition, we have $\alpha_f(x)=\lim\limits_{n\ra\infty}h_{L_1}(f^n(x))^{\frac{1}{n}}$ and $\alpha_g(y)=\lim\limits_{n\ra\infty}h_{L_2}(g^n(y))^{\frac{1}{n}}$.

Let $S=\{n\in\mathbb{N}|\ (f^n(x),g^n(y))\in(V\backslash\mathrm{Supp}(E))(K)\}$. By our assumption, this is an infinite set. Pick a positive integer $N$ such that $NA-p_2^*L_2$ is ample and fix a Moriwaki height function $h_A:V(\overline{K})\ra\mb{R}_{\geq1}$. Denote $a_n=(f^n(x),g^n(y))$ for short. Then by Proposition \ref{Prop_ht}, we see that the sequences $\{h_{L_1}(f^n(x))-h_A(a_n)\}_{n\in S}$ and $\{Nh_A(a_n)-h_{L_2}(g^n(y))\}_{n\in S}$ are bounded below. Hence $\{Nh_{L_1}(f^n(x))-h_{L_2}(g^n(y))\}_{n\in S}$ is bounded below. But since $S$ is infinite, this contradicts the fact that $1\leq\alpha_f(x)<\alpha_g(y)$. Thus we get a contradiction and finish the proof.
\endproof

\section*{Acknowledgements}
We would like to thank Xiangqian Yang for some useful discussions.

This work is supported by the National Natural Science Foundation of China Grant No. 12271007.

\bibliographystyle{alpha}
\bibliography{reference}

\end{spacing}
\end{document}